\newtheorem{theorem}{Theorem}[section]
\newtheorem{lemma}{Lemma}[section]
\newtheorem{remark}{Remark}[section]
\newtheorem{algorithm}{Algorithm}[section]
\newtheorem{example}{Example}[section]
\begin{document}
\title{An Efficient Multigrid Method for Ground
State Solution of Bose-Einstein Condensates\footnote{This work was supported in part by National
Natural Science Foundations of China (NSFC 91730302, 11771434, 91330202, 11371026, 11001259,
11031006, 2011CB309703), Science Challenge Project (No. TZ2016002),
the National Center for Mathematics and Interdisciplinary Science, CAS.}}
\author{
Hehu Xie\footnote{LSEC, ICMSEC,
    Academy of Mathematics and Systems Science, Chinese Academy of
    Sciences, Beijing 100190, China, and School of Mathematical
    Sciences, University of Chinese Academy of Sciences, Beijing,
    100049, China (hhxie@lsec.cc.ac.cn)},\ \ \
Fei Xu\footnote{Beijing Institute for Scientific and Engineering
    Computing, Beijing University of Technology, Beijing 100124, China
    (xufei@lsec.cc.ac.cn)} \ \  {\rm and} \ \
Ning Zhang\footnote{LSEC, ICMSEC,
    Academy of Mathematics and Systems Science, Chinese Academy of
    Sciences, Beijing 100190, China, and School of Mathematical
    Sciences, University of Chinese Academy of Sciences, Beijing,
    100049, China (zhangning114@lsec.cc.ac.cn)}
}
\date{}
\maketitle
\begin{abstract}
An efficient multigrid method is proposed to compute the ground state solution of
Bose-Einstein condensations by the finite element method based on the combination of
the multigrid method for nonlinear eigenvalue problem and an efficient implementation for the nonlinear iteration.
The proposed numerical method not only has the optimal convergence rate, but also
has the asymptotically optimal computational work which is independent from the nonlinearity of the problem.
The independence from the nonlinearity means that the asymptotic estimate of the computational work
can reach almost the same as that of solving the corresponding linear boundary value problem by the multigrid method.
Some numerical experiments are provided to validate the efficiency of the proposed method.
\vskip0.3cm {\bf Keywords.} BEC, GPE, eigenvalue problem, multigrid, tensor, finite element method.
\vskip0.2cm {\bf AMS subject classifications.} 65N30, 65N25, 65L15, 65B99.
\end{abstract}
\section{Introduction}
It is well known that Bose-Einstein condensation (BEC), which is a gas of bosons that are in the same quantum state,
is an important and active field \cite{AnderEnsherMattewWieman,AnglinKetterle,BaoCai,CornellWieman,Ketterle} in physics.
The properties of the condensate at zero or very low temperature \cite{DalGioPitaString,LiebSeiYang}
can be described by the well-known Gross-Pitaevskii equation (GPE) \cite{Gross}
which is a time-independent nonlinear Schr\"odinger equation \cite{LaudauLifschitz}.

Since this paper considers the numerical method for the nonlinear eigenvalue problem,
we are concerned with the following non-dimensionalized GPE problem: Find $\lambda\in\mathbb R$ and a function $u$ such that
\begin{equation}\label{GPEsymply2}
\left\{
\begin{array}{rcl}
-\Delta u + Wu + \zeta |u|^2u &=& \lambda u,\ \ \  {\rm in}\  \Omega,\\
u &=& 0,\ \ \ \ \  {\rm on}\ \partial \Omega,\\
\int_\Omega|u|^2d\Omega&=& 1,
\end{array}
\right.
\end{equation}\\where $\Omega \subset \mathbb{R}^d$ $(d = 1,2,3)$ denotes the computing
domain which has the cone property \cite{Adams}, $\zeta$ is some positive constant and
$W(x) = \gamma_1 x^2_1 +\ldots + \gamma_d x^2_d \geq 0$
with  $\gamma_1, \ldots, \gamma_d > 0$ \cite{BaoTang,ZhouBEC}.

The convergence of the finite element method for GPEs is first proved in \cite{ZhouBEC} and  \cite{CancesChakirMaday} gives
prior error estimates which will be used in the analysis of our method.
There also exist two-grid finite element methods for GPE in \cite{ChienHuangJengLi,ChienJeng,HenningMalqvistPeterseim}.
Recently, a type of multigrid method for eigenvalue problems has been proposed in
\cite{LinXie,Xie_Steklov,Xie_Nonconforming,Xie_JCP,XieXie_CICP}. Especially, \cite{XieXie_CICP}
gives a multigrid method for GPE (\ref{GPEsymply2}) and the corresponding error estimates.
This type of multigrid method is designed based on the multilevel correction method in \cite{LinXie},
and a sequence of nested finite element spaces with different levels
of accuracy which can be built in the same way as the multilevel method for boundary
value problems \cite{Xu}. The corresponding error estimates have already been obtained in \cite{XieXie_CICP}.
Furthermore, the estimate of computational work has also been given in \cite{XieXie_CICP}.
The computational work of the multigrid in \cite{XieXie_CICP} is linear scale but depends on the nonlinearity (i.e. the value of $\zeta$)
in some sense.  The aim of this paper is to improve the efficiency further with a special implementing method 
for the multigrid method for the GPE. With the proposed implementing technique, the multigrid method can really 
arrive the asymptotically optimal computational complexity which is almost independent of` the nonlinearity of the GPE.

An outline of the paper goes as follows. In Section 2, we introduce finite element
method for the ground state solution of BEC, i.e. non-dimensionalized GPE (\ref{GPEsymply2}).
A type of one correction step is given in Sections 3.
In Section 4, we propose an efficient implementing technique for the nonlinear eigenvalue problem
included in the one correction step. A type of multigrid algorithm for solving the non-dimensionalized GPE
by the finite element method will be stated in Section 5. Three numerical examples are provided in Section 6
to validate the efficiency of the proposed numerical method in this paper.
Some concluding remarks are given in the last section.

\section{Finite element method for GPE problem}
This section is devoted to introducing some notation and finite element method
for the GPE (\ref{GPEsymply2}). The letter $C$ (with or without subscripts) denotes
a generic positive constant which may be different at its different occurrences.
For convenience, the symbols $\lesssim$, $\gtrsim$ and $\approx$
will be used in this paper. That $x_1\lesssim y_1, x_2\gtrsim y_2$
and $x_3\approx y_3$, mean that $x_1\leq C_1y_1$, $x_2 \geq c_2y_2$
and $c_3x_3\leq y_3\leq C_3x_3$ for some constants $C_1, c_2, c_3$
and $C_3$ that are independent of mesh sizes (see, e.g., \cite{Xu}).
The standard notation for the Sobolev spaces $W^{s,p}(\Omega)$ and their
associated norms $\|\cdot\|_{s,p,\Omega}$ and seminorms $|\cdot|_{s,p,\Omega}$
(see, e.g., \cite{Adams}) will be used. For $p=2$, we denote
$H^s(\Omega)=W^{s,2}(\Omega)$ and $H_0^1(\Omega)=\{v\in H^1(\Omega):\ v|_{\partial\Omega}=0\}$,
where $v|_{\partial\Omega}=0$ is in the sense of trace and 
$\|\cdot\|_{s,\Omega}=\|\cdot\|_{s,2,\Omega}$. In this paper, we set $V=H_0^1(\Omega)$
and use $\|\cdot\|_s$ to denote $\|\cdot\|_{s,\Omega}$ for simplicity.

For the aim of finite element
discretization, we define the corresponding weak form for (\ref{GPEsymply2}) as follows:
Find $(\lambda,u)\in \mathbb{R}\times V$ such that $b(u,u) = 1$ and
\begin{equation}\label{GPEweakform}
a(u,v) = \lambda b(u,v),\ \ \  \forall v \in V,
\end{equation}
where
\[
a(u,v) := \int_\Omega\big(\nabla u\nabla v + Wuv + \zeta|u|^2 uv\big)d\Omega,
\ \ \ \ b(u,v) := \int_\Omega uvd\Omega.
\]

Now, let us define the finite element method \cite{BrennerScott,Ciarlet}
for the problem (\ref{GPEweakform}). First we
generate a shape-regular decomposition of the computing domain
$\Omega \subset \mathbb{R}^d$ $(d = 2,3)$ into triangles or rectangles for $d = 2$
(tetrahedrons or hexahedrons for $d = 3$). The diameter of a cell $K \in \mathcal{T}_h$ is
denoted by $h_K$ and define $h$ as $h:=\max_{K\in \mathcal T_h}h_K$.
Then the corresponding linear finite element space $V_h\subset V$
can be built on the mesh $\mathcal{T}_h$. We assume that $V_h\subset V$
is a family of finite-dimensional spaces that satisfy the following assumption:
\begin{equation}\label{approximation_fem}
\lim_{h\rightarrow 0}\inf_{v_h \in V_h} \|w - v_h\|_1 = 0,\ \ \ \forall w\in V.
\end{equation}

The standard finite element method for (\ref{GPEweakform}) is to solve the following
 eigenvalue problem:
Find $(\bar{\lambda}_h,\bar{u}_h)\in \mathbb{R}\times V_h$ such that
 $b(\bar{u}_h,\bar{u}_h) = 1$ and
\begin{equation}\label{GPEfem}
a(\bar{u}_h,v_h) = \bar{\lambda}_h b(\bar{u}_h,v_h),\ \ \   \forall v_h \in V_h.
\end{equation}
Then we define
\begin{equation}\label{delta}
\delta_h(u) := \inf_{v_h\in V_h}\|u - v_h\|_1.
\end{equation}
For understanding the multigrid method in this paper, we state the error estimates of the finite element
method for GPE (\ref{GPEsymply2}).
\begin{lemma}(\cite[Theorem 1]{CancesChakirMaday},\cite{ZhouBEC})\label{lemma:Maday}
There exists $h_0 > 0$, such that for all $0 < h < h_0$, the smallest eigenpair approximation
 $(\bar{\lambda}_h,\bar{u}_h)$ of (\ref{GPEfem}) has following error estimates:
\begin{eqnarray}
\|u - \bar{u}_h\|_1 &\lesssim& \delta_h(u),\\
\|u - \bar{u}_h\|_0 &\lesssim& \eta_a(V_h)\|u - \bar{u}_h\|_1 \lesssim \eta_a(V_h)\delta_h(u),\\
|\lambda - \bar{\lambda}_h| &\lesssim& \|u - \bar{u}_h\|^2_1
+ \|u - \bar{u}_h\|_0\lesssim \eta_a(V_h)\delta_h(u),
\end{eqnarray}
where $\eta_a(V_h)$ is defined as follows:
\begin{eqnarray}\label{eta_a_h}
\eta_a(V_h)=\|u-\bar{u}_h\|_1+ \sup_{f\in L^2(\Omega),\|f\|_0=1}\inf_{v_h\in V_h}\|Tf-v_h\|_1
\end{eqnarray}
with the operator $T$ being defined as follows:
 Find $Tf\in u^{\perp}$ such that
\vskip-0.7cm
\begin{eqnarray*}
a(Tf,v)+2(\zeta |u|^2(Tf),v)- (\lambda(Tf),v)=(f,v),\ \ \ \ \forall v\in u^{\perp},
\end{eqnarray*}
where $u^{\perp}=\big\{v\in H_0^1(\Omega)| \int_{\Omega}uvd\Omega=0\big\}$.
\end{lemma}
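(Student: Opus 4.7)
The plan is to follow the Brezzi--Rappaz--Raviart framework adapted to the GPE, combining a quasi-optimal $H^1$ estimate with an Aubin--Nitsche duality argument based on the operator $T$, and finishing with an algebraic identity for the eigenvalue error. Throughout I would lean on the fact that $u$ is the (non-degenerate) ground state, so the linearized form $\tilde a_u(v,w) := \int_\Omega(\nabla v\nabla w + Wvw + \zeta|u|^2 vw)\,d\Omega$ shifted by $-\lambda b(\cdot,\cdot)$ is coercive on $u^\perp$, which is exactly the content that makes $T$ a well-defined bounded operator.

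First, for the $H^1$ estimate, I would localize the analysis around $u$: writing any $v\in V_h$ close to $u$ as $v = w + r$ with $w\in u^\perp$, linearize the discrete equation (\ref{GPEfem}) around $u$ and treat $\zeta|v|^2v - \zeta|u|^2u - 2\zeta|u|^2(v-u)$ as a higher order perturbation. In dimensions $d\le 3$ the embedding $H^1\hookrightarrow L^6$ makes $v\mapsto |v|^2 v$ locally Lipschitz on $V$, so a standard fixed-point/contraction argument gives a unique discrete ground state $\bar u_h$ in a neighborhood of $u$ for $h$ small, together with the quasi-optimal bound $\|u-\bar u_h\|_1\lesssim \inf_{v_h\in V_h}\|u-v_h\|_1 = \delta_h(u)$.

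Next, for the $L^2$ estimate I would apply duality via $T$. Split $u-\bar u_h = \alpha_h u + e_h^\perp$ with $e_h^\perp\in u^\perp$; the normalization $b(u,u)=b(\bar u_h,\bar u_h)=1$ forces $|\alpha_h|\lesssim \|u-\bar u_h\|_0^2$, so only $e_h^\perp$ matters. Set $f = e_h^\perp/\|e_h^\perp\|_0$, test the defining equation of $Tf$ against $e_h^\perp$ and subtract the Galerkin orthogonality, obtaining after rearrangement
\[
\|e_h^\perp\|_0 \lesssim \|u-\bar u_h\|_1\inf_{v_h\in V_h}\|Tf - v_h\|_1 + (\text{cubic remainder}),
\]
where the cubic remainder comes from replacing $\zeta(|u|^2u - |\bar u_h|^2\bar u_h)$ by its linearization $2\zeta|u|^2(u-\bar u_h)$ and is controlled by $\|u-\bar u_h\|_1^2$ through Sobolev embedding. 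Absorbing the remainder and taking the supremum over $f$ inside the definition of $\eta_a(V_h)$ yields the claimed bound $\|u-\bar u_h\|_0\lesssim \eta_a(V_h)\|u-\bar u_h\|_1\lesssim \eta_a(V_h)\delta_h(u)$.

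Finally, for the eigenvalue estimate I would start from the identity $\lambda - \bar\lambda_h = a(u,u) - a(\bar u_h,\bar u_h)$ which follows from $a(u,u)=\lambda$, $a(\bar u_h,\bar u_h)=\bar\lambda_h$. Inserting the symmetric linearization $\tilde a_u$ and using bilinear bookkeeping together with $b(u,u)=b(\bar u_h,\bar u_h)=1$, the quadratic piece collapses to $\tilde a_u(u-\bar u_h,u-\bar u_h) - \lambda b(u-\bar u_h,u-\bar u_h)$, which is $O(\|u-\bar u_h\|_1^2)$. The leftover nonlinear piece $\zeta\int_\Omega(|u|^4 - |\bar u_h|^4)\,d\Omega - 2\zeta\int_\Omega|u|^2(u-\bar u_h)u\,d\Omega$ factors as a degree-two polynomial in $(u-\bar u_h)$ multiplied by a bounded function, giving $O(\|u-\bar u_h\|_0 + \|u-\bar u_h\|_1^2)$; combining with the previous two estimates yields the stated bound. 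The main obstacle is the first step: making rigorous the coercivity of the linearization on $u^\perp$ and transferring it to the discrete setting, since the ground state must be proved simple (so that $\lambda$ lies strictly below the rest of the spectrum of the linearized operator) before any of the above perturbation arguments can be applied; everything downstream is standard bookkeeping once this is in place and the Sobolev embedding $H^1\hookrightarrow L^6$ is invoked to tame the cubic term.
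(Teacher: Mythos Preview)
The paper does not prove this lemma; it is quoted from \cite[Theorem~1]{CancesChakirMaday} and \cite{ZhouBEC} and used as a black box for the rest of the analysis. Your sketch is in fact the strategy those references employ---a Brezzi--Rappaz--Raviart localization around the non-degenerate ground state for the quasi-optimal $H^1$ bound, an Aubin--Nitsche duality through the linearized operator $T$ on $u^\perp$ for the $L^2$ bound, and an algebraic expansion of $a(u,u)-a(\bar u_h,\bar u_h)$ for the eigenvalue error---so your approach is correct in outline and aligned with the cited proofs; there is nothing in the present paper to compare against beyond the citation itself.

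One small bookkeeping slip worth fixing when you write it out: for real $u$ the map $v\mapsto \zeta|v|^2v=\zeta v^3$ linearizes to $3\zeta u^2(u-\bar u_h)$, not $2\zeta u^2(u-\bar u_h)$. The ``extra'' $\zeta|u|^2$ is already hidden inside the nonlinear form $a(\cdot,\cdot)$, which is why the defining equation for $T$ adds only $2(\zeta|u|^2 Tf,v)$ on top of $a(Tf,v)$. This does not change the structure of your argument, only the constants.
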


\section{One correction step}\label{sec:1correction_fix-point}
In this section, we recall the  one correction step from \cite{XieXie_CICP} to
improve the accuracy of the given eigenpair approximation. This correction step contains
solving an auxiliary linear boundary value problem with multigrid method in the finer finite
element space and a GPE on a very low dimensional finite element space which will be discussed in the next section.

In order to define the one correction step, we introduce a very coarse mesh $\mathcal{T}_H$
and the low dimensional linear finite element space $V_H$ defined on the mesh $\mathcal{T}_H$.
Assume we have obtained an eigenpair approximation $(\lambda_{h_k},u_{h_k})\in
\mathbb{R}\times V_{h_k}$ and the coarse space $V_H$ is a subset of $V_{h_k}$.
Let $V_{h_{k+1}} \subset V$
be a finer finite element space such that $V_{h_k}\subset V_{h_{k+1}}$.
Based on this finer finite element space, we define the following one correction step.
\begin{algorithm}\label{Algm:One_Step_Correction}
One Correction Step
\begin{enumerate}
\item Define the following auxiliary boundary value problem:
Find $\widehat{u}_{h_{k+1}} \in V_{h_{k+1}}$ such that
\begin{eqnarray}\label{Aux_Source_Problem}
&&(\nabla\widehat{u}_{h_{k+1}},\nabla v_{h_{k+1}})+(W\widehat{u}_{h_{k+1}},v_{h_{k+1}})
+(\zeta|u_{h_k}|^2\widehat{u}_{h_{k+1}},v_{h_{k+1}}) \nonumber\\
&&\quad\quad\quad\quad\quad\quad \quad\quad\quad \quad\quad\quad \quad\quad\quad  =
\lambda_{h_k}b(u_{h_k},v_{h_{k+1}}),\ \ \
 \forall v_{h_{k+1}}\in V_{h_{k+1}}.
\end{eqnarray}
Solve this equation with multigrid method
\cite{Bramble,BrennerScott,Hackbush,McCormick,Xu}
to obtain an approximation
$\widetilde{u}_{h_{k+1}} \in V_{h_{k+1}}$ with the error estimate
$\|\widetilde{u}_{h_{k+1}} - \widehat{u}_{h_{k+1}}\|_1 \lesssim \varsigma_{h_{k+1}}$.
Here $\varsigma_{h_{k+1}}$ is used to denote the accuracy for the multigrid iteration.

\item Define a new finite element space
$V_{H,h_{k+1}} = V_H + {\rm span}\{\widetilde{u}_{h_{k+1}}\}$ and solve the
following nonlinear eigenvalue problem:
Find $(\lambda_{h_{k+1}},u_{h_{k+1}}) \in \mathbb{R} \times V_{H,h_{k+1}}$
such that $b(u_{h_{k+1}},u_{h_{k+1}}) = 1$ and
\begin{eqnarray}\label{simple_Eigen_Problem}
a(u_{h_{k+1}},v_{H,h_{k+1}}) = \lambda_{h_{k+1}}b(u_{h_{k+1}},v_{H,h_{k+1}}),
\ \  \forall v_{H,h_{k+1}} \in V_{H,h_{k+1}}.
\end{eqnarray}
\end{enumerate}
Summarize above two steps into
\begin{eqnarray*}
(\lambda_{h_{k+1}},u_{h_{k+1}}) = Correction(V_H,\lambda_{h_k},u_{h_k},V_{h_{k+1}},\varsigma_{h_{k+1}}).
\end{eqnarray*}
\end{algorithm}

Similarly, we also state the following error estimates from \cite{XieXie_CICP}
for the one correction step defined in Algorithm \ref{Algm:One_Step_Correction}.
\begin{theorem}\label{Thm:Error_Estimate_One_Step_Correction}(\cite[Theorem 3.1]{XieXie_CICP})
Assume $h_k<h_0$ (as in Lemma \ref{lemma:Maday}) and there exists a real number $\varepsilon_{h_k}(u)$ such that
the given eigenpair approximation $(\lambda_{h_k},u_{h_k})\in\mathbb{R}\times V_{h_k}$
has the following error estimates:
\begin{eqnarray}
\|\bar{u}_{h_k}-u_{h_k}\|_{0}+|\bar{\lambda}_{h_k}-\lambda_{h_k}|
= \varepsilon_{h_k}(u).\label{Estimate_lambda_lambda_h_k}
\end{eqnarray}
Then after one correction step, the resultant approximation
$(\lambda_{h_{k+1}},u_{h_{k+1}})\in\mathbb{R}\times V_{h_{k+1}}$ has the
following error estimates:
\begin{eqnarray}
\|\bar{u}_{h_{k+1}}-u_{h_{k+1}}\|_{1} &\lesssim& \varepsilon_{h_{k+1}}(u),
\label{Estimate_u_u_h_{k+1}}\\
\|\bar{u}_{h_{k+1}}-u_{h_{k+1}}\|_{0} &\lesssim& \eta_{a}(V_H)\|u-u_{h_{k+1}} \|_{1},
\label{Estimate_u_u_h_{k+1}_negative}\\
|\bar{\lambda}_{h_{k+1}}-\lambda_{h_{k+1}}|&\lesssim&
\eta_{a}(V_H)\varepsilon_{h_{k+1}}(u),\label{Estimate_lambda_lambda_h_{k+1}}
\end{eqnarray}
where
$\varepsilon_{h_{k+1}}(u):= \eta_{a}(V_{h_k})\delta_{h_k}(u)
+\|\bar{u}_{h_k}-u_{h_k}\|_{0}+|\bar{\lambda}_{h_k}-\lambda_{h_k}|
+\varsigma_{h_{k+1}}$.
\end{theorem}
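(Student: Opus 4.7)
The plan is to decompose the analysis of the correction step into two independent pieces: first a perturbation estimate for the auxiliary linear boundary value problem (3.1), and second a finite-element-type error estimate for the small nonlinear eigenvalue problem (3.2) posed on the enriched space $V_{H,h_{k+1}} = V_H + \operatorname{span}\{\widetilde{u}_{h_{k+1}}\}$. Throughout, I would measure errors against the ``exact'' finite element solution $\bar{u}_{h_{k+1}}$ on $V_{h_{k+1}}$ provided by Lemma \ref{lemma:Maday}, and then triangulate through $u$ when needed.

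For the first piece, I would compare the exact solution $\widehat{u}_{h_{k+1}}$ of (\ref{Aux_Source_Problem}) with $\bar{u}_{h_{k+1}}$. Subtracting the defining equations, the residual is driven by the two ``frozen'' ingredients: the potential perturbation $\zeta(|u_{h_k}|^2-|\bar{u}_{h_{k+1}}|^2)\bar{u}_{h_{k+1}}$ and the right-hand side perturbation $\lambda_{h_k}u_{h_k}-\bar{\lambda}_{h_{k+1}}\bar{u}_{h_{k+1}}$. Using coercivity of the linear operator (which is uniformly positive thanks to $W\geq 0$ and $\zeta>0$), together with a Sobolev embedding to control the cubic perturbation by $\|u_{h_k}-\bar{u}_{h_{k+1}}\|_0$, and then inserting the bound $\|u_{h_k}-\bar{u}_{h_{k+1}}\|_0 \lesssim \|\bar{u}_{h_k}-u_{h_k}\|_0 + \eta_a(V_{h_k})\delta_{h_k}(u)$ (obtained from Lemma \ref{lemma:Maday} applied on level $h_k$ and $h_{k+1}$), I expect
\[
\|\widehat{u}_{h_{k+1}} - \bar{u}_{h_{k+1}}\|_1 \lesssim \eta_a(V_{h_k})\delta_{h_k}(u) + \|\bar{u}_{h_k}-u_{h_k}\|_0 + |\bar{\lambda}_{h_k}-\lambda_{h_k}|.
\]
Adding the multigrid solver error $\|\widetilde{u}_{h_{k+1}}-\widehat{u}_{h_{k+1}}\|_1\lesssim \varsigma_{h_{k+1}}$ yields $\|\widetilde{u}_{h_{k+1}}-\bar{u}_{h_{k+1}}\|_1 \lesssim \varepsilon_{h_{k+1}}(u)$.

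For the second piece, since $\widetilde{u}_{h_{k+1}}\in V_{H,h_{k+1}}$, the nonlinear eigenvalue problem (\ref{simple_Eigen_Problem}) is a conforming Galerkin problem on a subspace of $V_{h_{k+1}}$ containing a good approximation of $\bar{u}_{h_{k+1}}$. A Céa-type argument adapted to the nonlinear setting (as in Lemma \ref{lemma:Maday}, and already developed in \cite{CancesChakirMaday,XieXie_CICP}) then gives
\[
\|\bar{u}_{h_{k+1}}-u_{h_{k+1}}\|_1 \lesssim \inf_{v\in V_{H,h_{k+1}}}\|\bar{u}_{h_{k+1}}-v\|_1 \leq \|\bar{u}_{h_{k+1}}-\widetilde{u}_{h_{k+1}}\|_1 \lesssim \varepsilon_{h_{k+1}}(u),
\]
which is (\ref{Estimate_u_u_h_{k+1}}). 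The $L^2$ estimate (\ref{Estimate_u_u_h_{k+1}_negative}) is obtained by the Aubin--Nitsche duality using the operator $T$ from Lemma \ref{lemma:Maday}: testing the error equation against $Tf$ and exploiting Galerkin orthogonality on $V_{H,h_{k+1}}$, I can subtract any element of $V_H \subset V_{H,h_{k+1}}$, producing the factor $\inf_{v_H\in V_H}\|Tf-v_H\|_1 \leq \eta_a(V_H)$ multiplying $\|u-u_{h_{k+1}}\|_1$. The eigenvalue estimate (\ref{Estimate_lambda_lambda_h_{k+1}}) then follows from the standard Rayleigh quotient identity $\bar{\lambda}_{h_{k+1}}-\lambda_{h_{k+1}}$ expressed as quadratic forms in the error, whose leading-order controllable part is $\|u-u_{h_{k+1}}\|_1^2$ plus an $L^2$ contribution from the nonlinear term, each subsequently absorbed into $\eta_a(V_H)\varepsilon_{h_{k+1}}(u)$.

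The main obstacle I anticipate is controlling the cubic nonlinearity $\zeta|u|^2 u$ cleanly in the duality step. Writing $|u|^2 u - |u_{h_{k+1}}|^2 u_{h_{k+1}}$ as a telescoping sum and bounding it by products of $H^1$ norms via Sobolev embedding is straightforward in the primal estimate, but in the duality argument one must ensure that exactly one factor of $\|u-u_{h_{k+1}}\|_0$ (or $\eta_a(V_H)\|u-u_{h_{k+1}}\|_1$) remains after the expansion, matching the linearization already hidden in the definition of $T$ (which contains the Fréchet derivative $2\zeta|u|^2(Tf)$). Ensuring that no stray $L^2$ factors of $u-u_{h_{k+1}}$ appear unbalanced, and that the $\eta_a(V_H)$ factor is cleanly isolated rather than intertwined with $\eta_a(V_{h_{k+1}})$, will require care in bookkeeping, but is feasible since $V_H\subset V_{H,h_{k+1}}$ allows picking test functions from the coarse subspace in the Galerkin orthogonality step.
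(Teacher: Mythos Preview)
The paper does not supply a proof of this theorem; it is quoted directly from \cite[Theorem~3.1]{XieXie_CICP} with no argument given in the present text (the sentence preceding the statement reads ``Similarly, we also state the following error estimates from \cite{XieXie_CICP}\ldots''). There is therefore nothing in this paper to compare your proposal against.

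For what it is worth, your two-step plan---a perturbation estimate for the auxiliary linear problem (\ref{Aux_Source_Problem}) yielding $\|\widetilde{u}_{h_{k+1}}-\bar{u}_{h_{k+1}}\|_1\lesssim\varepsilon_{h_{k+1}}(u)$, followed by an application of the Canc\`es--Chakir--Maday best-approximation theory on the enriched subspace $V_{H,h_{k+1}}$---is the standard route for multilevel correction schemes of this kind and matches the structure of the argument in \cite{XieXie_CICP}. The one point worth flagging is that the C\'ea-type bound you invoke in the second piece,
\[
\|\bar{u}_{h_{k+1}}-u_{h_{k+1}}\|_1 \;\lesssim\; \inf_{v\in V_{H,h_{k+1}}}\|\bar{u}_{h_{k+1}}-v\|_1,
\]
is not literally Lemma~\ref{lemma:Maday} (which compares against the continuous ground state $u$), but a discrete-to-discrete variant in which $\bar{u}_{h_{k+1}}$ plays the role of the ``exact'' solution on the ambient space $V_{h_{k+1}}\supset V_{H,h_{k+1}}$. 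You correctly flag that this variant is what is actually developed in \cite{CancesChakirMaday,XieXie_CICP}; just be aware that a naive detour through $u$ and the triangle inequality would leave an uncontrolled $\delta_{h_{k+1}}(u)$ term, so the discrete-reference version is genuinely needed. Your anticipated difficulty in the Aubin--Nitsche step---isolating a clean factor of $\eta_a(V_H)$ from the cubic nonlinearity via the linearized operator $T$---is real but is exactly the content of the analysis in \cite{CancesChakirMaday}, so no new idea is required there.
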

\section{Efficient implementation}
In this section, we show an efficient implementing method for Step 2 of
Algorithm \ref{Algm:One_Step_Correction}, i.e., solving the nonlinear eigenvalue problem (\ref{simple_Eigen_Problem}).
For simplicity of notation, we use $h$ to denote $h_{k+1}$. Then $V_h$, $\widetilde u_h$ and
$V_{H,h}=V_H+{\rm span}\{\widetilde u_h\}$ denote $V_{h_{k+1}}$, $\widetilde u_{h_{k+1}}$
and $V_{H,h_{k+1}}=V_H+{\rm span}\{\widetilde u_{h_{k+1}}\}$, respectively, in this section.
Here we also define $N_H:={\rm dim}V_H$ and $N_h:={\rm dim}V_h$.
Let  $\{\phi_{k,H}\}_{1\leq k\leq N_H}$ denotes the Lagrange basis function for the coarse finite element space $V_H$.

For simplicity, the fixed point (self-consistent field) iteration method with dumping technique is adopted to solve
the nonlinear eigenvalue problem (\ref{simple_Eigen_Problem}).
In each nonlinear iteration, the main content is to assemble the matrices for problem (\ref{simple_Eigen_Problem})
which is defined on the special space $V_{H,h}$. The function in $V_{H,h}$ can be denoted by $u_{H,h}=u_H+\alpha \widetilde u_h$.
Solving problem (\ref{simple_Eigen_Problem}) is to obtain the function $u_H\in V_h$ and the value $\alpha\in \mathbb R$.
Let $u_H=\sum_{k=1}^{N_H}u_k\phi_{k,H}$ and define the vector $\mathbf u_H$ as $\mathbf u_H=[u_1,\cdots, u_{N_H}]^T$.

Based on the structure of the space $V_{H,h}$, the matrix version of
the eigenvalue problem (\ref{simple_Eigen_Problem}) can be written as follows
\begin{equation}\label{Eigenvalue_H_h}
\left(
\begin{array}{cc}
A_H & b_{Hh}\\
b_{Hh}^T&\xi
\end{array}
\right)
\left(
\begin{array}{c}
\mathbf u_H\\
\alpha
\end{array}
\right)
=\lambda_h
\left(
\begin{array}{cc}
M_H& c_{Hh}\\
c_{Hh}^T & \gamma
\end{array}
\right)
\left(
\begin{array}{c}
\mathbf u_H\\
\alpha
\end{array}
\right),
\end{equation}
where $\mathbf u_H\in \mathbb R^{N_H}$ and $\alpha\in\mathbb R$.

It is obvious that the matrix $M_H$, the vector $c_{Hh}$ and the scalar $\gamma$ will not change
during the nonlinear iteration process as long as we have obtained the function $\widetilde u_h$.
But the matrix $A_H$, the vector $b_{Hh}$ and the scalar $\xi$ will change during the nonlinear
iteration process. It is required to consider the efficient implementation to update the
the matrix $A_H$, the vector $b_{Hh}$ and the scalar $\xi$ since there is a function $\widetilde u_h$
which is defined on the fine mesh $\mathcal T_h$. The aim of this section is to propose an efficient method to update
the matrix $A_H$, the vector $b_{Hh}$ and the scalar $\xi$ without computation on the fine mesh $\mathcal T_h$
during the nonlinear iteration process. Assume we have a given initial value $(u_H,\alpha)\in V_H\times \mathbb R$.
Now, in order to carry out the nonlinear iteration for eigenvalue problem (\ref{Eigenvalue_H_h}), we come to consider
the computation for the matrix $A_H$, vector $b_{Hh}$ and the scalar $\xi$.

From the definitions of the space $V_{H,h}$ and the eigenvalue problem (\ref{simple_Eigen_Problem}), the matrix
$A_H$ has the following expansion
\begin{eqnarray}
(A_H)_{i,j}&=&\int_{\Omega}\nabla\phi_{i,H}\nabla\phi_{j,H}d\Omega
+\int_{\Omega}w\phi_{i,H}\phi_{j,H}d\Omega
+\int_{\Omega}\zeta(u_H+\alpha\widetilde u_h)^2\phi_{i,H}\phi_{j,H}d\Omega\nonumber\\
&:=&(A_{H,1})_{i,j}+(A_{H,2})_{i,j},
\end{eqnarray}
where
\begin{eqnarray}\label{A_H_1}
(A_{H,1})_{i,j} &=&  \int_{\Omega}\nabla\phi_{i,H}\nabla\phi_{j,H}d\Omega
+\int_{\Omega}w\phi_{i,H}\phi_{j,H}d\Omega
\end{eqnarray}
and
\begin{eqnarray}
(A_{H,2})_{i,j} &=&\int_{\Omega}\zeta(u_H+\alpha\widetilde u_h)^2\phi_{i,H}\phi_{j,H}d\Omega \nonumber\\
&=&\int_{\Omega}\zeta\big((u_H)^2+2\alpha u_H\widetilde u_h
+\alpha^2(\widetilde u_h)^2\big)\phi_{i,H}\phi_{j,H}d\Omega\nonumber\\
&=&\int_{\Omega}\zeta(u_H)^2\phi_{i,H}\phi_{j,H}d\Omega
+ 2\alpha\int_{\Omega}\zeta\widetilde u_hu_H\phi_{i,H}\phi_{j,H}d\Omega \nonumber\\
&&\ \ \ + \alpha^2\int_\Omega\zeta (\widetilde u_h)^2\phi_{i,H}\phi_{j,H}d\Omega\nonumber\\
&:=& (A_{H,2,1})_{i,j} + 2\alpha(A_{H,2,2})_{i,j} + \alpha^2(A_{H,2,3})_{i,j}.
\end{eqnarray}
It is obvious that the computational work for the matrix
\begin{eqnarray}\label{A_H_2_1}
(A_{H,2,1})_{i,j} = \int_{\Omega}\zeta(u_H)^2\phi_{i,H}\phi_{j,H}d\Omega
\end{eqnarray}
is $\mathcal O(N_H)$.
The matrices $A_{H,1}$, and $A_{H,2,3}$ which is defined by
\begin{eqnarray}\label{A_H_2_3}
(A_{H,2,3})_{i,j} = \int_\Omega \zeta(\widetilde u_h)^2\phi_{i,H}\phi_{j,H}d\Omega
\end{eqnarray}
will not change during the nonlinear iteration process.

The matrix $A_{H,2,2}$ has the following expansion
\begin{eqnarray}\label{A_H_2_2_i_j}
(A_{H,2,2})_{i,j} = \sum_{k=1}^{N_H}u_k\int_{\Omega}\zeta\widetilde u_h\phi_{k,H}\phi_{i,H}\phi_{j,H}d\Omega.
\end{eqnarray}
The expansion (\ref{A_H_2_2_i_j}) gives a hint to define a tensor $T_H$ as follows
\begin{eqnarray}\label{T_H}
(T_H)_{i,j,k} = \int_{\Omega}\zeta\widetilde u_h\phi_{k,H}\phi_{i,H}\phi_{j,H}d\Omega.
\end{eqnarray}
Then the matrix $A_{H,2,2}$ has the following computational scheme
\begin{eqnarray}\label{Tensor_Multiply}
A_{H,2,2} = T_H\cdot\mathbf u_H,
\end{eqnarray}
where $T_H\cdot\mathbf u_H$ denotes the multiplication of the tensor $T_H$
and the vector $\mathbf u_H$ corresponding to the last index $k$.
From (\ref{T_H}), it is easy to know that the dimension of the tensor $T_H$ is $\mathbb R^{N_H\times N_H\times N_H}$
and the number of nonzero elements is $\mathcal O(N_H)$.
Thus $T_H$ is a sparse tensor and the computational work for the operation (\ref{Tensor_Multiply})
is $\mathcal O(N_H)$.

Now, let us consider the computation for the vector $b_{Hh}$. From the definition of the space $V_{H,h}$ and the
problem (\ref{simple_Eigen_Problem}), the vector $b_{Hh}$ has the following expansion
\begin{eqnarray}
(b_{Hh})_i &=& \int_{\Omega}\nabla \widetilde u_h\nabla\phi_{i,H}d\Omega + \int_{\Omega}w\widetilde u_h\phi_{i,H}d\Omega
+\int_\Omega \zeta(u_H+\alpha \widetilde u_h)^2\widetilde u_h\phi_{i,H}d\Omega\nonumber\\
&:=& (b_{Hh,1})_i+ (b_{Hh,2})_i,
\end{eqnarray}
where
\begin{eqnarray}\label{b_H_h_1}
(b_{Hh,1})_i =  \int_{\Omega}\nabla \widetilde u_h\nabla\phi_{i,H}d\Omega
+ \int_{\Omega}w\widetilde u_h\phi_{i,H}d\Omega,
\end{eqnarray}
and
\begin{eqnarray}\label{b_H_h_2}
(b_{Hh,2})_i &=& \int_\Omega\zeta(u_H+\alpha \widetilde u_h)^2
\widetilde u_h\phi_{i,H}d\Omega\nonumber\\
&=&\int_\Omega\zeta \big((u_H)^2+2\alpha \widetilde u_h u_H
+\alpha^2(\widetilde u_h)^2\big)\widetilde u_h\phi_{i,H}d\Omega\nonumber\\
&=& \int_\Omega \zeta(u_H)^2\widetilde u_h\phi_{i,H}d\Omega
+ 2\alpha \int_\Omega \zeta(\widetilde u_h)^2u_H\phi_{i,H}d\Omega
+\alpha^2\int_{\Omega}\zeta(\widetilde u_h)^3\phi_{i,H}d\Omega\nonumber\\
&:=& (b_{Hh,2,1})_i + 2\alpha(b_{Hh,2,2})_i + \alpha^2(b_{Hh,2,3})_i.
\end{eqnarray}
It is obvious that the vector $b_{Hh,1}$ will not change during the nonlinear iteration process.
Thus, we only need to consider the computation for the vector $b_{Hh,2}$.

First, the computation for the vector $b_{Hh,2,1}$ can be implemented as follows
\begin{eqnarray}\label{b_H_h_2_1}
(b_{Hh,2,1})_i = \int_{\Omega}\big(\sum_{j=1}^{N_H} u_j\phi_{j,H}\big)^2\widetilde u_h\phi_{i,H}d\Omega
= \sum_{j=1}^{N_H}\sum_{k=1}^{N_H}u_ju_k\int_{\Omega}\widetilde u_h \phi_{j,H}\phi_{k,H}\phi_{i,H}d\Omega.
\end{eqnarray}
Based on the tensor $T_H$, the vector $b_{Hh,2,1}$ can be calculated by the tensor multiplication
\begin{eqnarray}\label{Tensor_Multiply_2}
b_{Hh,2,1} = (T_H\cdot\mathbf u_H)\cdot\mathbf u_H =A_{H,2,2}\mathbf u_H,
\end{eqnarray}
where $(T_H\cdot\mathbf u_H)\cdot\mathbf u_H$ denotes the multiplication of the tensor $T_H$
with the vector $\mathbf u_H$ corresponding to the last two indices $k$ and $j$.
Similarly, the computational work for the operation (\ref{Tensor_Multiply_2}) is also $\mathcal O(N_H)$.

Then the computation for $b_{Hh,2,2}$ can be done as follows
\begin{eqnarray}\label{b_H_h_2_2}
(b_{Hh,2,2})_i = \sum_{j=1}^{N_H}u_j\int_{\Omega}\zeta(\widetilde u_h)^2\phi_{j,H}\phi_{i,H}d\Omega
= (A_{H,2,3} \mathbf u_H)_i.
\end{eqnarray}
Finally, the vector $b_{Hh,2,3}$ which is defined as
\begin{eqnarray}\label{b_H_h_2_3}
(b_{Hh,2,3})_i = \int_\Omega \zeta(\widetilde u_h)^3\phi_{i,H}d\Omega,
\end{eqnarray}
will not change neither during the nonlinear iteration process.

Now, let us come to consider the computation for the value $\xi$. It is obvious that
$\xi$ has the following expansion
\begin{eqnarray}
\xi &=&\int_{\Omega}|\nabla\widetilde u_h|^2d\Omega +\int_{\Omega}w (\widetilde u_h)^2d\Omega +
\int_{\Omega}\zeta(u_H+\alpha \widetilde u_h)^2(\widetilde u_h)^2d\Omega\nonumber\\
&=&\int_{\Omega}\big(|\nabla\widetilde u_h|^2 + w (\widetilde u_h)^2\big)d\Omega
+\int_\Omega \zeta\big((u_H)^2+2\alpha u_H\widetilde u_h
+\alpha^2(\widetilde u_h)^2\big)(\widetilde u_h)^2d\Omega\nonumber\\
&:=& d_1 + d_2,
\end{eqnarray}
where
\begin{eqnarray}\label{d_1}
d_1 = \int_{\Omega}\big(|\nabla\widetilde u_h|^2 + w (\widetilde u_h)^2\big)d\Omega,
\end{eqnarray}
and
\begin{eqnarray}\label{d_2}
d_2&=& \sum_{i=1}^{N_H}\sum_{j=1}^{N_H}u_iu_j\int_{\Omega}\zeta(\widetilde u_h)^2\phi_{i,H}\phi_{j,H}d\Omega
+2\alpha \sum_{i=1}^{N_H}u_i\int_\Omega \zeta(\widetilde u_h)^3\phi_{i,H}d\Omega \nonumber\\
&&\ \ \ + \alpha^2\int_\Omega \zeta(\widetilde u_h)^4d\Omega\nonumber\\
&=&\mathbf u_H^TA_{H,2,3}\mathbf u_H+2\alpha \mathbf u_H^Tb_{Hh,2,3}+\alpha^2 \xi_h,
\end{eqnarray}
with the scalar $\xi_h$ being defined as follows
\begin{eqnarray}
\xi_h = \int_\Omega \zeta(\widetilde u_h)^4d\Omega.\label{beta_h}
\end{eqnarray}

Based on above discussion and preparation, we define the following algorithm for solving the nonlinear
eigenvalue problem (\ref{simple_Eigen_Problem}) in Step 2 of Algorithm \ref{Algm:One_Step_Correction}.
\begin{algorithm}\label{Algorithm_Nonlinear_Iteration}
Nonlinear iteration method for eigenvalue problem (\ref{simple_Eigen_Problem})
\begin{enumerate}
\item Preparation for the nonlinear iteration:
Compute the tensor $T_H$ as in (\ref{T_H}),
the matrices $A_{H,1}$ and $A_{H,2,3}$ as in (\ref{A_H_1}) and (\ref{A_H_2_3}),
vectors $b_{Hh,1}$ and $b_{Hh,2,3}$ as in
(\ref{b_H_h_1}) and (\ref{b_H_h_2_3}), scalars $d_1$ and $\xi_h$ as in (\ref{d_1}) and (\ref{beta_h}).

\item Nonlinear iteration:
\begin{enumerate}
\item Produce the matrix $A_{H,2,1}$ and $A_{H,2,2}$ as in (\ref{A_H_2_1})
and (\ref{Tensor_Multiply}). Then compute the
matrix $A_H=A_{H,1}+A_{H,2,1}+2\alpha A_{H,2,2}+\alpha^2 A_{H,2,3}$.
\item Produce $b_{Hh,2,1}$ and $b_{Hh,2,2}$ as in (\ref{Tensor_Multiply_2})
and (\ref{b_H_h_2_2}). Then compute the vector $b_{Hh}=b_{Hh,1}+b_{Hh,2,1}+2\alpha b_{Hh,2,2}+\alpha^2 b_{Hh,2,3}$.
\item Compute the scalar $d_2$ as in (\ref{d_2}). Then compute the scalar
$\xi=d_1+d_2$.
\item Then solve the eigenvalue problem (\ref{Eigenvalue_H_h}) by some normal eigensolver to get a new
eigenfunction $(u_H,\alpha)$ and the corresponding
eigenvalue $\lambda_h$.
\item If the accuracy for nonlinear iteration is satisfied, stop the nonlinear iteration.
Otherwise, continue the  nonlinear iteration.
\end{enumerate}
\item Output the eigenfunction $u_h = u_H + \alpha \widetilde u_h = \sum_{i=1}^{N_H}u_i\phi_{i,H}+\alpha\widetilde u_h$ and the eigenvalue $\lambda_h$.
\end{enumerate}
\end{algorithm}
\begin{remark}\label{Remark_Tensor}
It is obvious that assembling the Tensor, matrices, vectors and scalar in Step 1 of
Algorithm \ref{Algorithm_Nonlinear_Iteration} needs computational
work $\mathcal O(N_h)$. But, the computational work for each nonlinear iteration step (Step 2)
of Algorithm \ref{Algorithm_Nonlinear_Iteration} is only $\mathcal O(M_H)$, where $M_H$ denotes the
computational work for solving the eigenvalue problem (\ref{Eigenvalue_H_h}) and it holds that $M_H\geq N_H$.
Assume there needs $\varpi$ nonlinear iteration times. Then the computational work for
Algorithm \ref{Algorithm_Nonlinear_Iteration} is only $\mathcal O(N_h+\varpi M_H)$.
\end{remark}

\section{Multigrid method for GPE}
Based on the preparation in previous sections, we introduce a type of multigrid method based on the
{\it One Correction Step} defined in Algorithms \ref{Algm:One_Step_Correction} and the implementing technique
defined in Algorithm \ref{Algorithm_Nonlinear_Iteration}.
This type of multigrid method can obtain the same optimal error estimate as
that for solving the GPE directly on the finest finite element space.

In order to develop multigrid scheme, we define a sequence of triangulations
$\mathcal{T}_{h_k}$
of $\Omega$ as follows. Suppose $\mathcal{T}_{h_1}$ is
produced from $\mathcal{T}_H$ by some regular refinements and
let $\mathcal{T}_{h_k}$ be obtained from $\mathcal{T}_{h_{k-1}}$ via a regular
refinement such that 
\begin{equation}\label{refinement index}
h_k\approx\frac{1}{\beta}h_{k-1}, \ \ \ \ k = 2,\ldots,n,
\end{equation}
where $\beta$ denotes the refinement index.
Based on this sequence of meshes, we construct the corresponding linear finite
element spaces $V_{h_1}, \ldots, V_{h_n}$ such that
\begin{eqnarray}\label{fem_set_relationship}
V_{H} = V_{h_0} \subseteq V_{h_1} \subset V_{h_2} \subset \ldots \subset V_{h_n}\subset V.
\end{eqnarray}
In this paper, we assume the following relations of approximation errors hold
\begin{eqnarray}\label{Error_k_k_1}
\eta_a(V_{h_k})\approx \frac{1}{\beta}\eta_a(V_{h_{k-1}}),\ \ \ \
\delta_{h_k}(u)\approx\frac{1}{\beta}\delta_{h_{k-1}}(u),\ \ \ k=2,\ldots,n.
\end{eqnarray}
\begin{algorithm}\label{Algm:Multi_Correction}
Multigrid Scheme for GPE
\begin{enumerate}
\item Construct a sequence of nested finite element spaces $V_{H}, V_{h_1}, V_{h_2},
\ldots, V_{h_n}$ such that (\ref{fem_set_relationship}) and (\ref{Error_k_k_1}) hold.

\item Solve the GPE on the initial finite element space $V_{h_1}$:
Find $(\lambda_{h_1},u_{h_1})\in \mathbb{R}\times V_{h_1}$ such that $b(u_{h_1},u_{h_1})=1$
and
\begin{eqnarray*}\label{Initial_Nonlinear_Eigen_Problem}
a(u_{h_1},v_{h_1})&=&\lambda_{h_1}b(u_{h_1},v_{h_1}),\ \ \ \ \forall v_{h_1}\in V_{h_1}.
\end{eqnarray*}
\item Do $k = 1, \ldots, n-1$\\
Obtain a new eigenpair approximation
$(\lambda_{h_{k+1}},u_{h_{k+1}})\in \mathbb{R}\times V_{h_{k+1}}$
with the one correction step being defined by Algorithm \ref{Algm:One_Step_Correction} and the nonlinear iteration
being defined by Algorithm \ref{Algorithm_Nonlinear_Iteration}
\begin{eqnarray*}
(\lambda_{h_{k+1}},u_{h_{k+1}}) =
{\it Correction}(V_{H},\lambda_{h_k},u_{h_k},V_{h_{k+1}},\varsigma_{h_{k+1}}).
\end{eqnarray*}
End Do
\end{enumerate}
Finally, we obtain an eigenpair approximation $(\lambda_{h_{n}},u_{h_{n}})
\in \mathbb{R}\times V_{h_{n}}$.
\end{algorithm}
The error estimates for Algorithm \ref{Algm:Multi_Correction} can be stated as follows.
\begin{theorem}\label{Thm:Multi_Correction}(\cite[Theorem 4.1,Corollary 4.1]{XieXie_CICP})
Assume $h_1<h_0$ (as in Lemma \ref{lemma:Maday}) and the error $\varsigma_{h_{k+1}}$ of the linear solving by the multigrid method
in the correction step on the $(k+1)$-th level mesh satisfies $\varsigma_{h_{k+1}}\leq \eta_a(V_{h_k})\delta_{h_k}(u)$ for
$k=1,\ldots,n-1$. After implementing Algorithm \ref{Algm:Multi_Correction}, the resultant eigenpair
approximation $(\lambda_{h_n},u_{h_n})$ has following error estimates
\begin{eqnarray}
\|\bar{u}_{h_n}-u_{h_n}\|_1 &\lesssim&\beta^2\eta_a(V_{h_n})\delta_{h_n}(u),\label{Multi_Correction_Err_fun1}\\
\|\bar{u}_{h_n}-u_{h_n}\|_0 &\lesssim& \eta_a(V_{h_n})\delta_{h_n}(u),\label{Multi_Correction_Err_fun0}\\
|\bar{\lambda}_{h_n}-\lambda_{h_n}| &\lesssim& \eta_a(V_{h_n})\delta_{h_n}(u),\label{Multi_Correction_Err_eigen}
\end{eqnarray}
under the condition $C\beta^2\eta_a(V_H)<1$ for the concerned constant $C$.

Furthermore, we have the following optimal error estimates
\begin{eqnarray}
\|u-u_{h_n}\|_1 &\lesssim&\delta_{h_n}(u),\label{Multi_Correction_Err_fun_final}\\
\|u-u_{h_n}\|_0 &\lesssim& \eta_a(V_{h_n})\delta_{h_n}(u),\label{Multi_Correction_Err_fun_final}\\
|\lambda-\lambda_{h_n}| &\lesssim& \eta_a(V_{h_n})\delta_{h_n}(u).\label{Multi_Correction_Err_eigen_final}
\end{eqnarray}
\end{theorem}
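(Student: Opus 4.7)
\medskip

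\noindent\textbf{Proof proposal.} The plan is an induction on the correction level $k=1,\dots,n$, repeatedly invoking the one correction step bound (Theorem \ref{Thm:Error_Estimate_One_Step_Correction}) together with the geometric refinement relations (\ref{Error_k_k_1}). The convenient bookkeeping quantity is
\begin{equation*}
e_k := \|\bar{u}_{h_k}-u_{h_k}\|_0 + |\bar{\lambda}_{h_k}-\lambda_{h_k}|,
\end{equation*}
since this is exactly the ``input data'' that feeds into $\varepsilon_{h_{k+1}}(u)$ in Theorem \ref{Thm:Error_Estimate_One_Step_Correction}. At the base level, Step 2 of Algorithm \ref{Algm:Multi_Correction} solves the discrete GPE on $V_{h_1}$ directly, so $(\lambda_{h_1},u_{h_1})=(\bar{\lambda}_{h_1},\bar{u}_{h_1})$ and $e_1=0$ (up to the tolerance of the initial solver, which I would absorb into $\varsigma_{h_2}$).

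For the inductive step I would plug the hypothesis $\varsigma_{h_{k+1}}\le\eta_a(V_{h_k})\delta_{h_k}(u)$ into the definition of $\varepsilon_{h_{k+1}}(u)$ from Theorem \ref{Thm:Error_Estimate_One_Step_Correction} to get
\begin{equation*}
\varepsilon_{h_{k+1}}(u)\ \lesssim\ \eta_a(V_{h_k})\delta_{h_k}(u) + e_k,
\end{equation*}
then combine (\ref{Estimate_u_u_h_{k+1}_negative}) with the triangle inequality $\|u-u_{h_{k+1}}\|_1\le\|u-\bar{u}_{h_{k+1}}\|_1+\|\bar{u}_{h_{k+1}}-u_{h_{k+1}}\|_1$, applying Lemma \ref{lemma:Maday} to the first term and (\ref{Estimate_u_u_h_{k+1}}) to the second, to obtain
\begin{equation*}
e_{k+1}\ \lesssim\ \eta_a(V_H)\bigl[\delta_{h_{k+1}}(u) + \varepsilon_{h_{k+1}}(u)\bigr].
\end{equation*}
Using (\ref{Error_k_k_1}) to rewrite $\eta_a(V_{h_k})\delta_{h_k}(u)\approx\beta^2\eta_a(V_{h_{k+1}})\delta_{h_{k+1}}(u)$, this gives a one-step recursion of the form
\begin{equation*}
e_{k+1}\ \lesssim\ \eta_a(V_H)\delta_{h_{k+1}}(u) + C_1\beta^2\eta_a(V_H)\eta_a(V_{h_{k+1}})\delta_{h_{k+1}}(u) + C_2\eta_a(V_H)\,e_k.
\end{equation*}

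The main obstacle is closing this recursion without accumulating the $\beta^2$ factor across $n-1$ levels. This is precisely what the smallness hypothesis $C\beta^2\eta_a(V_H)<1$ buys: dividing by $\eta_a(V_{h_{k+1}})\delta_{h_{k+1}}(u)$ and using $\eta_a(V_{h_{k+1}})\le\eta_a(V_H)$, the inhomogeneous part is uniformly bounded and the contraction factor on $e_k/(\eta_a(V_{h_k})\delta_{h_k}(u))$ is $C_2\beta^2\eta_a(V_H)<1$. Summing the resulting geometric series from $k=1$ with $e_1=0$ yields
\begin{equation*}
e_k\ \lesssim\ \eta_a(V_{h_k})\delta_{h_k}(u),\qquad k=1,\dots,n.
\end{equation*}
Substituting this back into $\varepsilon_{h_n}(u)$ and using (\ref{Estimate_u_u_h_{k+1}}) produces the claimed $H^1$ bound $\|\bar{u}_{h_n}-u_{h_n}\|_1\lesssim\varepsilon_{h_n}(u)\lesssim\beta^2\eta_a(V_{h_n})\delta_{h_n}(u)$. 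The $L^2$ and eigenvalue bounds (\ref{Multi_Correction_Err_fun0}), (\ref{Multi_Correction_Err_eigen}) then follow immediately by feeding this into (\ref{Estimate_u_u_h_{k+1}_negative}) and (\ref{Estimate_lambda_lambda_h_{k+1}}) at level $k+1=n$, noting that $\eta_a(V_H)\beta^2\lesssim 1$ eliminates the extra $\beta^2$.

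Finally, for the ``optimal'' estimates (\ref{Multi_Correction_Err_fun_final})--(\ref{Multi_Correction_Err_eigen_final}), I would just apply the triangle inequality between $u$ and $u_{h_n}$ through $\bar{u}_{h_n}$, combining the bounds just proved with Lemma \ref{lemma:Maday}; the discrete-discrete error is dominated by the continuous-discrete error because $\beta^2\eta_a(V_{h_n})\le\beta^2\eta_a(V_H)\lesssim 1$, so nothing is lost in the $H^1$ estimate, and the $L^2$/eigenvalue estimates keep their natural $\eta_a(V_{h_n})\delta_{h_n}(u)$ rate. The entire argument is clean once the recursion is set up correctly; the only genuinely delicate point is checking that the condition $C\beta^2\eta_a(V_H)<1$ is strong enough to absorb both the $\beta^2$ coming from (\ref{Error_k_k_1}) and the constants hidden in the $\lesssim$ of Theorem \ref{Thm:Error_Estimate_One_Step_Correction}.
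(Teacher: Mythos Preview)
The paper does not actually prove this theorem; it is quoted from \cite[Theorem~4.1, Corollary~4.1]{XieXie_CICP} with no argument given here. Your inductive strategy---track $e_k=\|\bar u_{h_k}-u_{h_k}\|_0+|\bar\lambda_{h_k}-\lambda_{h_k}|$, feed it through Theorem~\ref{Thm:Error_Estimate_One_Step_Correction} level by level, and close the recursion via the geometric relations~(\ref{Error_k_k_1}) under the smallness hypothesis $C\beta^2\eta_a(V_H)<1$---is the standard route for multilevel correction schemes and is almost certainly what the cited reference does.

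One step in your write-up does not go through as stated, however. After dividing the recursion by $\eta_a(V_{h_{k+1}})\delta_{h_{k+1}}(u)$ you assert that ``using $\eta_a(V_{h_{k+1}})\le\eta_a(V_H)$, the inhomogeneous part is uniformly bounded.'' But the first inhomogeneous term in your displayed recursion is $\eta_a(V_H)\delta_{h_{k+1}}(u)$, and after division it becomes $\eta_a(V_H)/\eta_a(V_{h_{k+1}})\approx\beta^{k+1}$, which \emph{grows} with~$k$; the inequality $\eta_a(V_{h_{k+1}})\le\eta_a(V_H)$ points the wrong way for an upper bound. The recursion does close cleanly if you normalize instead by $\eta_a(V_H)\delta_{h_k}(u)$, yielding the uniform inductive bound $e_k\lesssim\eta_a(V_H)\delta_{h_k}(u)$, and from this (\ref{Multi_Correction_Err_fun1}) follows exactly as you describe. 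Whether the sharper factor $\eta_a(V_{h_n})$ (rather than $\eta_a(V_H)$) in (\ref{Multi_Correction_Err_fun0})--(\ref{Multi_Correction_Err_eigen}) is truly attainable from Theorem~\ref{Thm:Error_Estimate_One_Step_Correction} as stated---where the right-hand sides of (\ref{Estimate_u_u_h_{k+1}_negative}) and (\ref{Estimate_lambda_lambda_h_{k+1}}) carry $\eta_a(V_H)$---would require consulting the original argument in~\cite{XieXie_CICP}. For the final ``optimal'' estimates this distinction does not affect the $H^1$ bound (\ref{Multi_Correction_Err_fun_final}), but it does bear on the precise constant in the $L^2$ and eigenvalue rates.
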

Now, we come to estimate the computational work for the multigrid
scheme defined by Algorithm \ref{Algm:Multi_Correction} with the nonlinear iteration method
defined by Algorithm \ref{Algorithm_Nonlinear_Iteration}. Since the linear
boundary value problem (\ref{Aux_Source_Problem}) in Algorithm \ref{Algm:One_Step_Correction} is
solved by multigrid method, the computational work is asymptotically optimal.

First, we define the dimension of each level linear finite element space as
\begin{eqnarray*}
N_k := {\rm dim}V_{h_k},\ \ \ k=1,\ldots,n.
\end{eqnarray*}
Then we have
\begin{eqnarray}\label{relation_dimension}
N_k \thickapprox\Big(\frac{1}{\beta}\Big)^{d(n-k)}N_n,\ \ \ k=1,\ldots,n.
\end{eqnarray}

Different from the method in \cite{XieXie_CICP}, the computational work for the second step
in Algorithm \ref{Algm:One_Step_Correction}  with the nonlinear iteration method in
Algorithm \ref{Algorithm_Nonlinear_Iteration} is $\mathcal O(N_k+\varpi M_H)$
in each level space $V_{h_k}$.
\begin{theorem}\label{Computation_Work_Estimate_Theorem}
Assume solving the linear eigenvalue problem (\ref{Eigenvalue_H_h}) in the
coarse spaces $V_{H,h_k}$ ($k=1,\ldots, n$) and $V_{h_1}$ need work $\mathcal{O}(M_H)$
and $\mathcal{O}(M_{h_1})$, respectively, and the work of the multigrid method for
solving the source problem in $V_{h_k}$ is $\mathcal{O}(N_k)$
for $k=2,3,\ldots,n$. Let $\varpi$ denote the nonlinear iteration times when we solve
the nonlinear eigenvalue problem (\ref{simple_Eigen_Problem}).
Then the work involved in Algorithm \ref{Algm:Multi_Correction} has the following estimate:
\begin{eqnarray}\label{Computation_Work_Estimate}
{\rm Total\ work}&=&\mathcal{O}\big(N_n
+ \varpi M_H\log N_n + \varpi M_{h_1}\big).
\end{eqnarray}
\end{theorem}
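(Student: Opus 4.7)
The plan is to decompose the total work of Algorithm \ref{Algm:Multi_Correction} step by step and sum the contributions, relying on three ingredients: the cost of the initial solve on $V_{h_1}$, the per-level cost of the correction step (bounded via Remark \ref{Remark_Tensor} and the multigrid assumption for the linear source problem), and the geometric scaling of the dimensions $N_k$ coming from (\ref{relation_dimension}).

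First I would isolate the initial step of Algorithm \ref{Algm:Multi_Correction}, where we solve the nonlinear eigenvalue problem directly on $V_{h_1}$. Each nonlinear iteration requires work $\mathcal{O}(M_{h_1})$ by assumption, so with $\varpi$ iterations the contribution is $\mathcal{O}(\varpi M_{h_1})$. Next, for each correction level $k=1,\ldots,n-1$, I would split the cost of $Correction(V_H,\lambda_{h_k},u_{h_k},V_{h_{k+1}},\varsigma_{h_{k+1}})$ into two pieces: Step 1 (the auxiliary linear boundary value problem (\ref{Aux_Source_Problem}) solved by multigrid) costing $\mathcal{O}(N_{k+1})$ by hypothesis, and Step 2 (the low-dimensional nonlinear eigenvalue problem (\ref{simple_Eigen_Problem}) solved via Algorithm \ref{Algorithm_Nonlinear_Iteration}). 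By Remark \ref{Remark_Tensor}, the assembly of the tensor $T_H$ and the other fixed quantities in Step 1 of Algorithm \ref{Algorithm_Nonlinear_Iteration} costs $\mathcal{O}(N_{k+1})$, while each of the $\varpi$ nonlinear iterations costs only $\mathcal{O}(M_H)$ since all the updated matrices, vectors and scalars are produced on the coarse space via sparse tensor contractions. Hence the total cost of the correction step on level $k+1$ is $\mathcal{O}(N_{k+1} + \varpi M_H)$.

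Summing, the total work is
\begin{equation*}
\mathcal{O}\Big(\varpi M_{h_1} + \sum_{k=1}^{n-1}\big(N_{k+1} + \varpi M_H\big)\Big)
= \mathcal{O}\Big(\varpi M_{h_1} + \sum_{k=2}^{n} N_k + (n-1)\varpi M_H\Big).
\end{equation*}
Using (\ref{relation_dimension}) and the fact that $\beta>1$ and $d\ge 1$, the geometric sum satisfies
\begin{equation*}
\sum_{k=2}^{n} N_k \approx N_n \sum_{k=2}^{n} \Big(\frac{1}{\beta}\Big)^{d(n-k)} \lesssim N_n \cdot \frac{1}{1-\beta^{-d}} \lesssim N_n,
\end{equation*}
which absorbs all the mesh-refinement costs into a single $\mathcal{O}(N_n)$ term. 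Finally, from (\ref{relation_dimension}) we have $N_n \approx \beta^{d(n-1)} N_1$, so $n-1 \approx \log N_n / (d\log\beta) = \mathcal{O}(\log N_n)$, which turns the $(n-1)\varpi M_H$ contribution into $\mathcal{O}(\varpi M_H \log N_n)$. Combining yields the stated bound (\ref{Computation_Work_Estimate}).

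The argument is essentially bookkeeping rather than analysis: the only subtle point is making sure that the per-iteration cost $\mathcal{O}(M_H)$ in Step 2 of the correction truly does not grow with $N_{k+1}$, which is exactly the content of Remark \ref{Remark_Tensor} and is the entire reason the tensor-based implementation of Section 4 was introduced. Once that fact is invoked, the remainder reduces to a geometric series and the standard logarithmic level count for nested refinements.
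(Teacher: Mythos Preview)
Your proposal is correct and follows essentially the same approach as the paper: decompose the total work into the initial solve cost $\mathcal{O}(\varpi M_{h_1})$ plus the per-level correction costs $\mathcal{O}(N_{k+1}+\varpi M_H)$, sum the geometric series $\sum_{k=2}^n N_k\lesssim N_n$ via (\ref{relation_dimension}), and convert $(n-1)$ into $\mathcal{O}(\log N_n)$. The only differences are cosmetic---the paper indexes the per-level work as $W_k=\mathcal{O}(N_k+\varpi M_H)$ rather than $N_{k+1}$, and leaves the geometric-series bound and the $n-1\approx\log N_n$ step implicit---so your argument is, if anything, slightly more explicit than the original.
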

\begin{proof}
Let $W_k$ denote the work in the $k$-th finite element space $V_{h_k}$.
Then with the correction definition in Algorithms \ref{Algm:One_Step_Correction}, 
\ref{Algorithm_Nonlinear_Iteration} and Remark \ref{Remark_Tensor}, we have
\begin{eqnarray}\label{work_k}
W_k&=&\mathcal{O}\left(N_k + \varpi M_H\right).
\end{eqnarray}
Iterating (\ref{work_k}) and using the fact (\ref{relation_dimension}), we obtain
\begin{eqnarray}\label{Work_Estimate}
\text{Total work} &=& \sum_{k=1}^nW_k\nonumber =
\mathcal{O}\left(\varpi M_{h_1} +\sum_{k=2}^n
\Big(N_k + \varpi M_H\Big)\right)\nonumber\\
&=& \mathcal{O} \Big(\sum_{k=2}^n N_k+(n-1)\varpi M_H+ \varpi M_{h_1} \Big)\nonumber\\
&=& \mathcal{O}\left(\sum_{k=2}^n
\Big(\frac{1}{\beta}\Big)^{d(n-k)} N_n + \varpi M_H\log N_n+ \varpi M_{h_1} \right)\nonumber\\
&=& \mathcal{O}\big(N_n + \varpi M_H\log N_n + \varpi M_{h_1}\big).
\end{eqnarray}
This is the desired (\ref{Computation_Work_Estimate}) result and we complete the proof.
\end{proof}
\begin{remark}
With the help of the implementing  technique defined in
Algorithm \ref{Algorithm_Nonlinear_Iteration}, the nonlinear iteration times affect
the final computational work by $\varpi M_H$ and $\varpi M_{h_1}$ which is very
small scale since $M_H\ll N_{h_n}$ and $M_{h_1}\ll N_{h_n}$. It means that the final
computational work is asymptotic optimal and depends very weakly on the the nonlinearity of GPE.
\end{remark}
\section{Numerical examples}
In this section, we provided three numerical examples to validate the efficiency of the
multigrid method stated in Algorithm \ref{Algm:Multi_Correction} with the nonlinear iteration
technique defined in Algorithm \ref{Algorithm_Nonlinear_Iteration}.
About the convergence behavior of Algorithm \ref{Algm:Multi_Correction}, please
refer to \cite{JiaXieXieXu,XieXie_CICP} which gives the corresponding
numerical results. Here, we are only concerned with the computing time (in seconds) for
Algorithm \ref{Algm:Multi_Correction} for the eigenvalue problem (\ref{GPEsymply2}) with
different choices of $\zeta$.

\begin{example}\label{Example_1}
In this example, we solve GPE (\ref{GPEsymply2}) with the computing domain
$\Omega$ being the unit square
$\Omega=(0,1)\times (0,1)$, $W=x_1^2+x_2^2$ with different choices of $\zeta$.
\end{example}

The sequence of finite element spaces are constructed by using the linear finite element
on the sequence of meshes which are produced by regular refinement
with $\beta = 2$ (connecting the midpoints of each edge).
In this example, we choose the coarse mesh $\mathcal T_H=\mathcal T_{h_1}$ which is shown in
Figure \ref{Initial_Meshes_Example1} to investigate the CPU time (in seconds) for different $\zeta$.
\begin{figure}[ht]
\centering
\includegraphics[width=5cm,height=5cm]{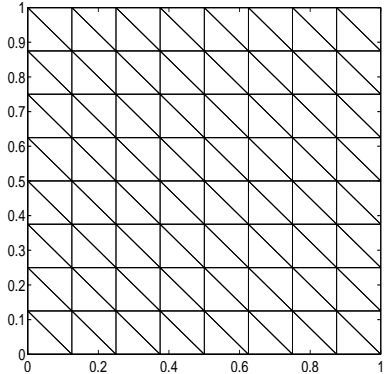}
\caption{\small\texttt The coarse mesh $\mathcal T_H$ 
for Example \ref{Example_1}}
\label{Initial_Meshes_Example1}
\end{figure}

For comparison, we also present the CPU time of the original multigrid method which has been introduced in
\cite{XieXie_CICP}. The CPU time results are shown in Figure \ref{Time_GPE_Model_2D}. From Figure \ref{Time_GPE_Model_2D},
we can find that the computational work of Algorithm \ref{Algm:Multi_Correction} with the nonlinear iteration defined by Algorithm \ref{Algorithm_Nonlinear_Iteration} is much smaller than that of the original multigrid method in \cite{XieXie_CICP}.
The computational work of the the original multigrid method in \cite{XieXie_CICP} has linear scale but depends on the
nonlinearity of the problem. It is well known that bigger value of $\zeta$ means stronger nonlinearity of the problem (\ref{GPEsymply2}).
This is why that the original multigrid method needs more CPU time for bigger $\zeta$. Figure \ref{Time_GPE_Model_2D} also shows that
the asymptotic computational work for Algorithm \ref{Algm:Multi_Correction} is almost independent from the nonlinearity
(the choice of $\zeta$) of the eigenvalue problem (\ref{GPEsymply2}) which consists with the estimate (\ref{Computation_Work_Estimate})
in Theorem \ref{Computation_Work_Estimate_Theorem}.
\begin{figure}[ht]
\centering
\includegraphics[width=6cm,height=6cm]{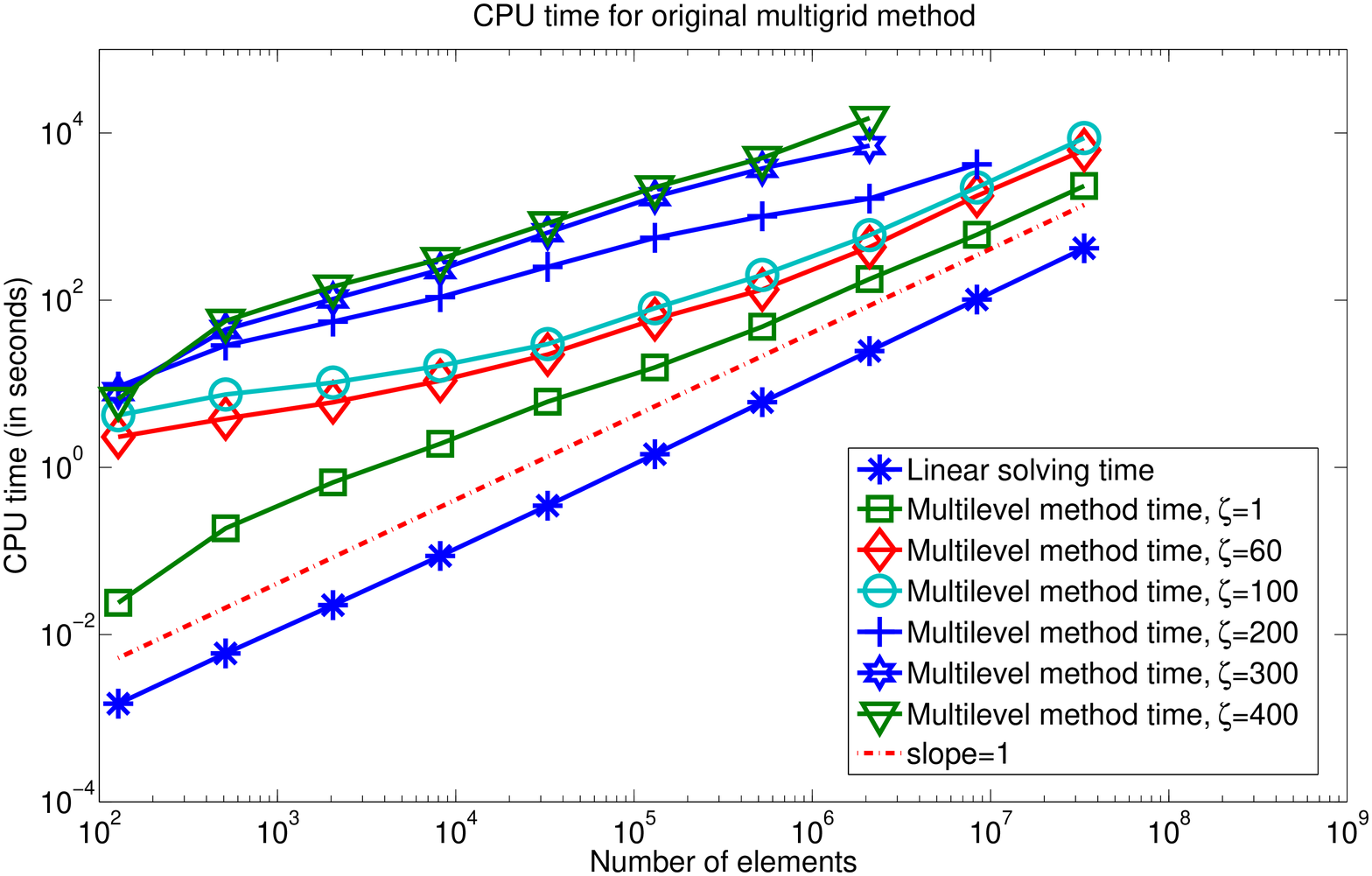}
\includegraphics[width=6cm,height=6cm]{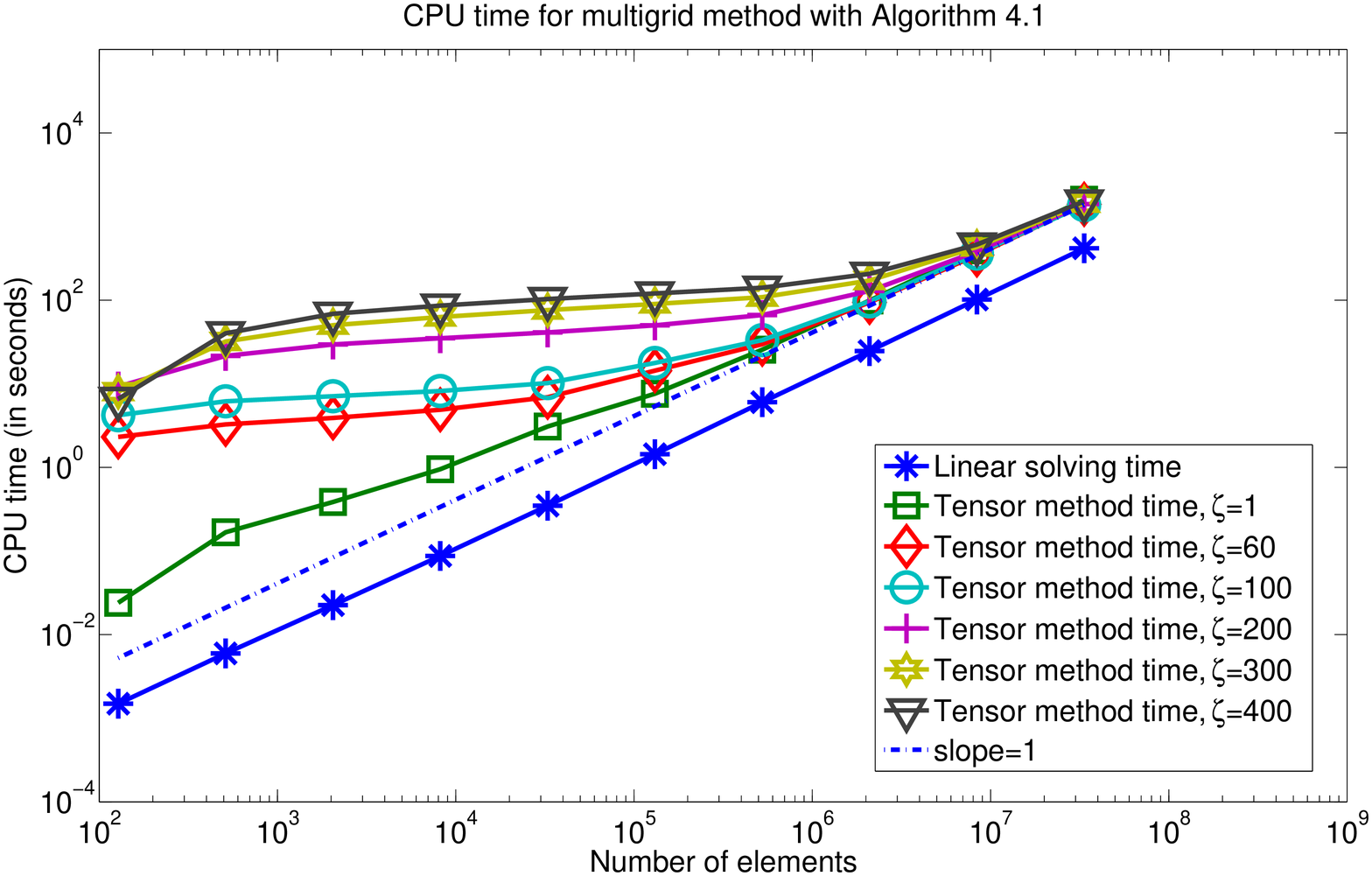}
\caption{\small\texttt The CPU time (in seconds) for two dimensional eigenvalue problem (\ref{GPEsymply2}).
Here linear solving time denotes the CPU time for the linear elliptic boundary value problem by the multigrid method,
multilevel method time denotes the CPU time for the original multigrid method in \cite{XieXie_CICP} and
tensor method time denotes the CPU time for Algorithm \ref{Algm:Multi_Correction}.}\label{Time_GPE_Model_2D}
\end{figure}

\begin{example}\label{Example_2}
In the second example, we solve GPE (\ref{GPEsymply2}) with the computing domain
$\Omega$ being the unit brick $\Omega=(0,1)\times (0,1)\times (0,1)$, $W=x_1^2+x_2^2+x_3^2$ with different choice of $\zeta$.
\end{example}

The sequence of finite element spaces are constructed by using the linear finite element
on the sequence of meshes which are produced by regular refinement
with $\beta = 2$ from the coarse mesh $\mathcal T_H$ which is shown in Figure \ref{Initial_Meshes_Example2}.
In this example, we also use the initial mesh $\mathcal{T}_H = \mathcal{T}_{h_1}$ to investigate the
CPU time (in seconds) for different $\zeta$.

In this example, we also present the CPU time for the original multigrid method introduced in
\cite{XieXie_CICP} for comparison. Figure \ref{Time_GPE_Model_3D} shows the CPU time results where we can find 
the same behavior as in Example \ref{Example_1}. 
The computational work of Algorithm \ref{Algm:Multi_Correction} is much smaller than the original
multigrid method in \cite{XieXie_CICP}. Figure \ref{Time_GPE_Model_3D} shows that
the computational work of the the original multigrid method in \cite{XieXie_CICP}
depends on the strength of the nonlinearity. Furthermore, the asymptotic computational work for
Algorithm \ref{Algm:Multi_Correction} is almost independent of the nonlinearity (the choice of $\zeta$) of
the eigenvalue problem (\ref{GPEsymply2}) which consists with the estimate (\ref{Computation_Work_Estimate})
in Theorem \ref{Computation_Work_Estimate_Theorem}.

\begin{figure}[ht]
\centering
\includegraphics[width=10cm,height=6cm]{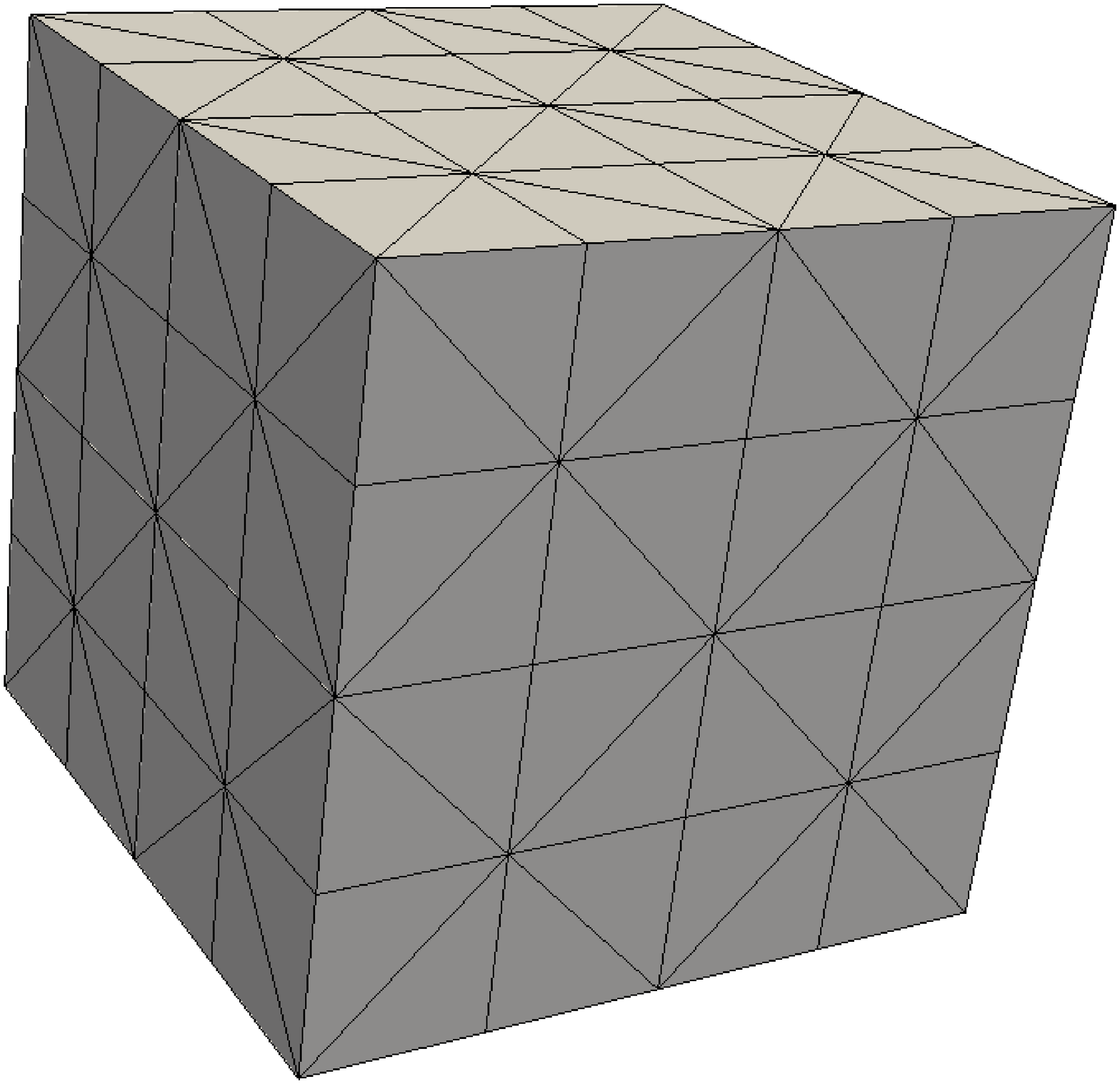}
\caption{\small\texttt The coarse mesh $\mathcal T_H=\mathcal T_{h_1}$ for Example \ref{Example_2}}
\label{Initial_Meshes_Example2}
\end{figure}

\begin{figure}[ht]
\centering
\includegraphics[width=6cm,height=6cm]{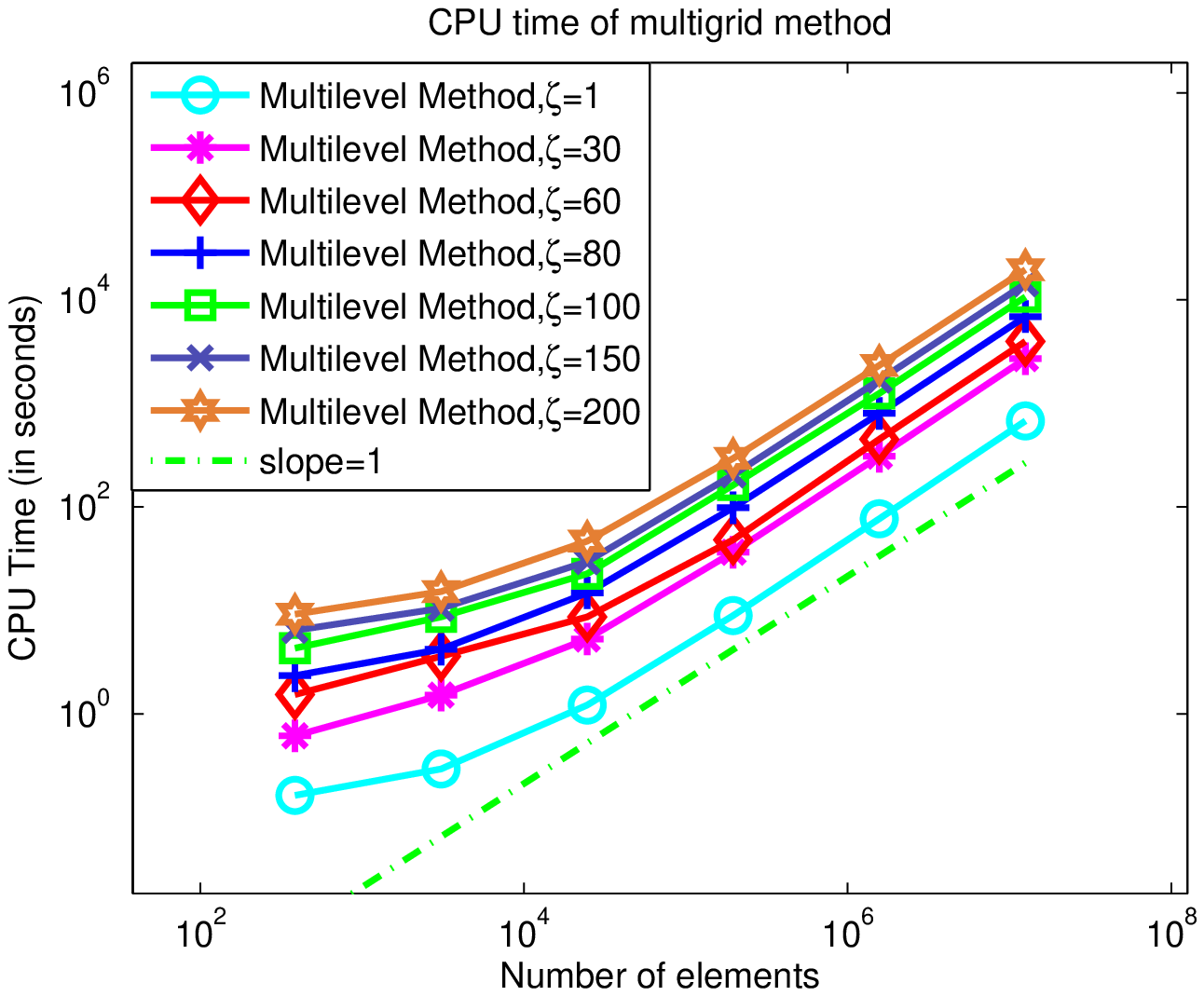}
\includegraphics[width=6cm,height=6cm]{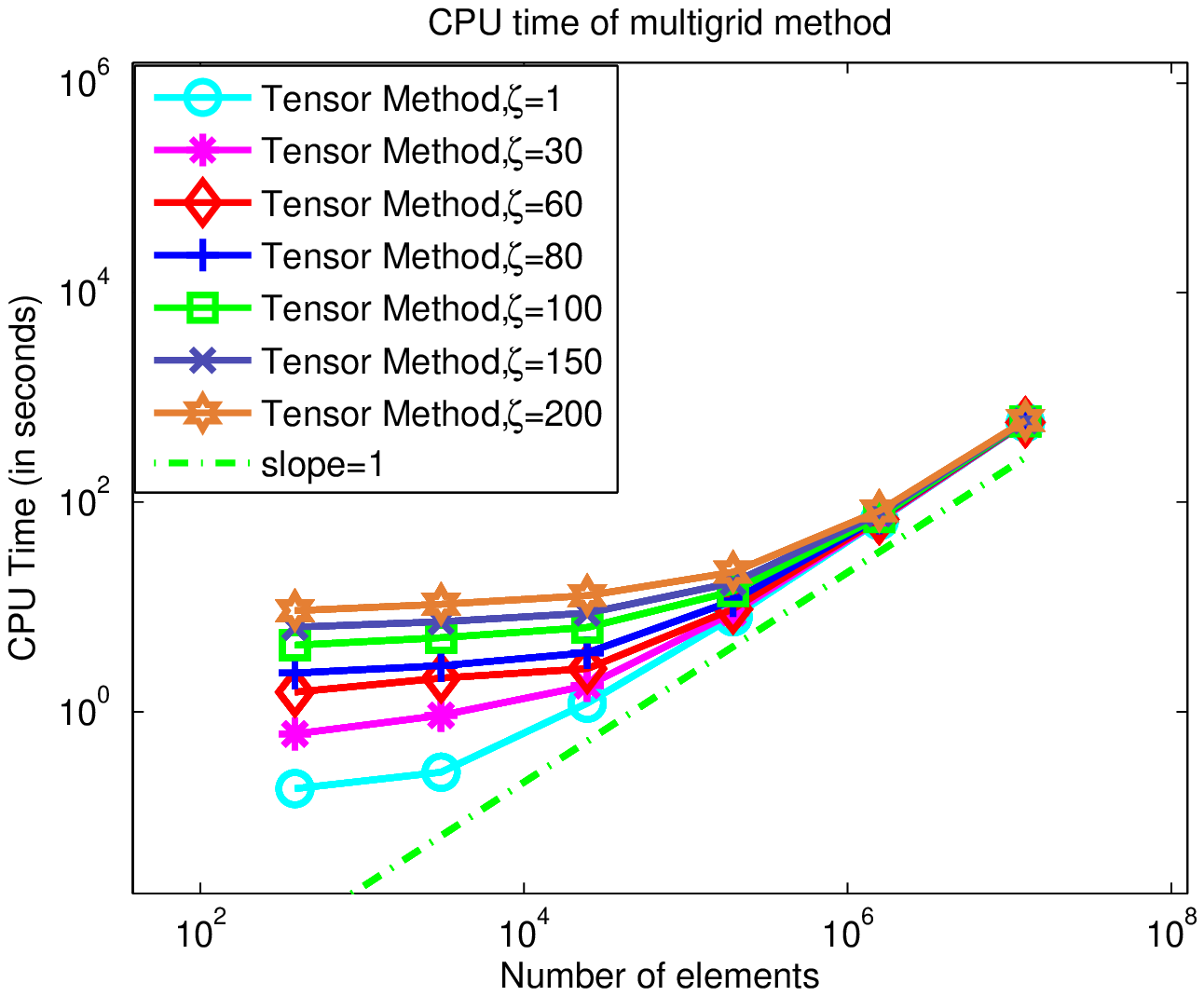}
\caption{\small\texttt The CPU time (in seconds) for three dimensional eigenvalue problem (\ref{GPEsymply2}).
Here multilevel method time denotes the CPU time for the original multigrid method in \cite{XieXie_CICP} and
tensor method time denotes the CPU time for Algorithm \ref{Algm:Multi_Correction}.}\label{Time_GPE_Model_3D}
\end{figure}
\begin{example}\label{Example_3}
In this example, we also solve the GPE (\ref{GPEsymply2}), where the computing domain $\Omega$ is
the $L$-shape domain $\Omega=(0,2)\times(0,2)\backslash[1, 2)\times [1, 2)$,
$W=x_1^2+x_2^2$.
\end{example}
Due to the reentrant corner of $\Omega$, the exact eigenfunction with singularities is
expected. The convergence order for approximate eigenpair is less than the order predicted
by the theory for regular eigenfunctions.  Thus, the adaptive refinement is adopted to couple
with the multigrid method described in Algorithm \ref{Algm:Multi_Correction}.
Since the exact eigenvalue is not known, we also choose an adequately accurate
approximation on a fine enough mesh as the exact one to check the error estimates.
We give the numerical results of the multigrid method in which the sequence of
meshes $\mathcal T_{h_1},\cdots,\mathcal T_{h_n}$ is produced by the adaptive refinement
with the following a posteriori error estimator
\begin{eqnarray}
\eta^2(u_{h_k},K):=h_K^2\| {\mathcal R}_{K}(\lambda_{h_k},u_{h_k})\|_{0,K}^2
+\sum_{e\in\mathcal E_I,e\subset \partial K}h_e\| {\mathcal J}_{e}(u_{h_k})\|_{0,e}^2,
\end{eqnarray}
where  the element residual  ${\mathcal R}_{K}(u_{h_k})$ and the jump residual
${\mathcal J}_e(u_{h_k})$ are defined as follows:
\begin{eqnarray}
&&{\mathcal R}_{K}(\lambda_{h_k},u_{h_k}) := \lambda_{h_k} u_{h_k}
+\Delta u_{h_k}-W u_{h_k}-\zeta |u_{h_k}|^2u_{h_k}, \qquad \text{in } K\in \mathcal T_{h_k}, \\
&&{\mathcal J}_{e}(u_{h_k}) := -\nabla v^+\cdot\nu^+-\nabla v^-\cdot\nu^-
:=[\nabla v]_e\cdot \nu_e, \quad\  \quad\quad\text{on } e\in \mathcal E_I.
\end{eqnarray}
Here  $\mathcal E_I$ denotes the set of interior faces (edges or sides) of $\mathcal T_{h_k}$ and
$e$ is the common side of elements $K^+$ and $K^-$ with the unit outward normals $\nu^+$ and
$\nu^-$, respectively, and $ \nu_e=\nu^- $.

Figure \ref{Error_GPE_Adaptive_Result} shows the corresponding numerical results by Algorithm \ref{Algm:Multi_Correction}
coupled with the adaptive refinement. From the numerical experiment, it is also observed the errors by
Algorithm \ref{Multi_Correction_Err_eigen} is the same as the original multigrid method in \cite{XieXie_CICP} since
the difference between these two algorithms is only the implementing technique.
From Figure \ref{Error_GPE_Adaptive_Result}, we can also find that Algorithm \ref{Algm:Multi_Correction}
can also work on the adaptive family of meshes and obtain the optimal accuracy.

In this example, for comparison, we also present the CPU time for the original multigrid method introduced in
\cite{XieXie_CICP}. The CPU time results are shown in Figure \ref{Time_GPE_Adaptive_Result} which shows
the same behavior as in previous examples. The computational work of Algorithm \ref{Algm:Multi_Correction}
is much smaller than the original multigrid method in \cite{XieXie_CICP}.
Figure \ref{Time_GPE_Adaptive_Result} shows that the computational work
of the the original multigrid method in \cite{XieXie_CICP}
depends on the strength of the nonlinearity. Furthermore, the asymptotic computational work for
Algorithm \ref{Algm:Multi_Correction} is almost independent from the nonlinearity (the choice of $\zeta$) of
the eigenvalue problem (\ref{GPEsymply2}) even on the adaptive family of meshes.

\begin{figure}[ht]
\centering
\includegraphics[width=6cm,height=6cm]{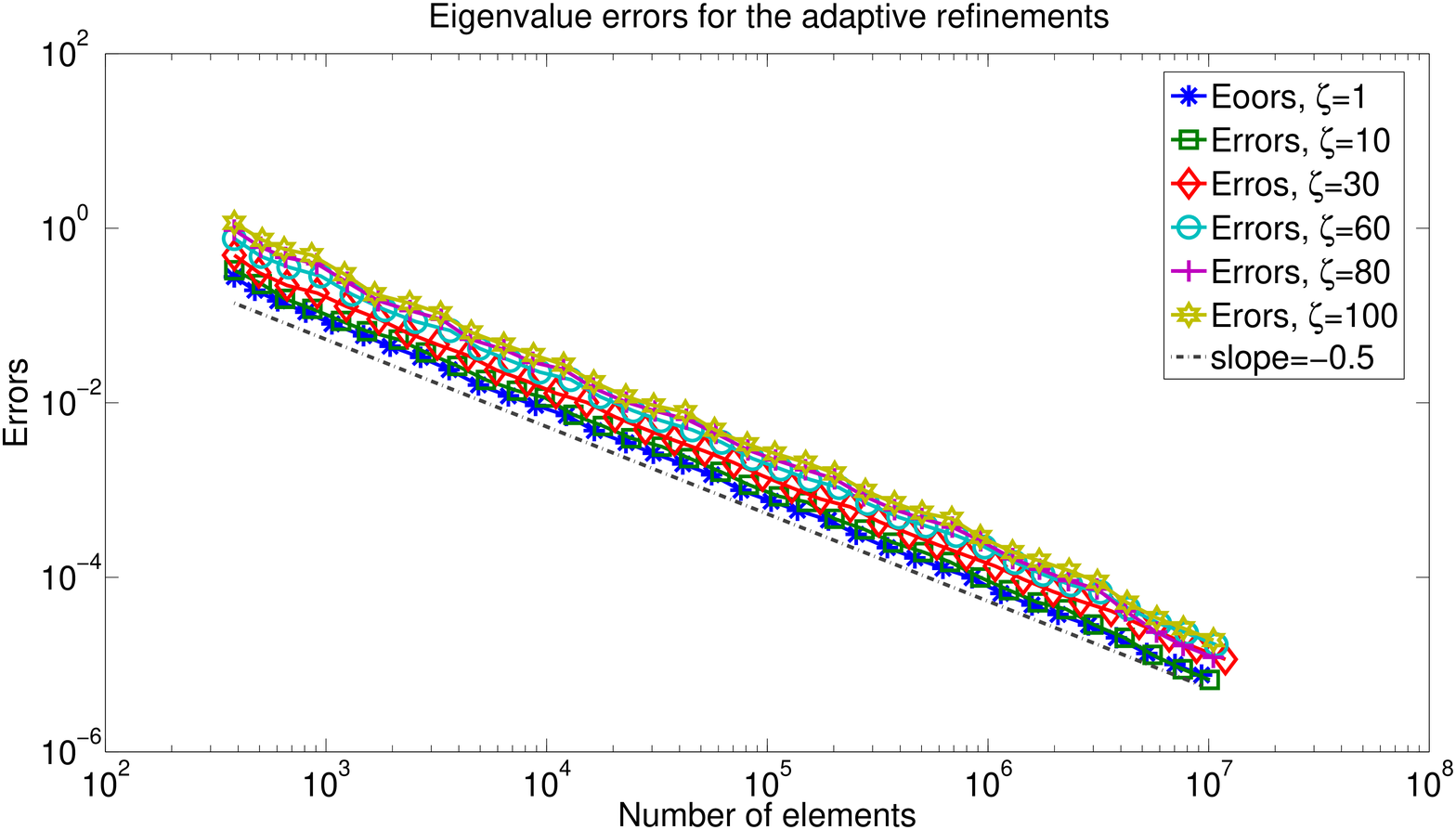}
\includegraphics[width=6cm,height=6cm]{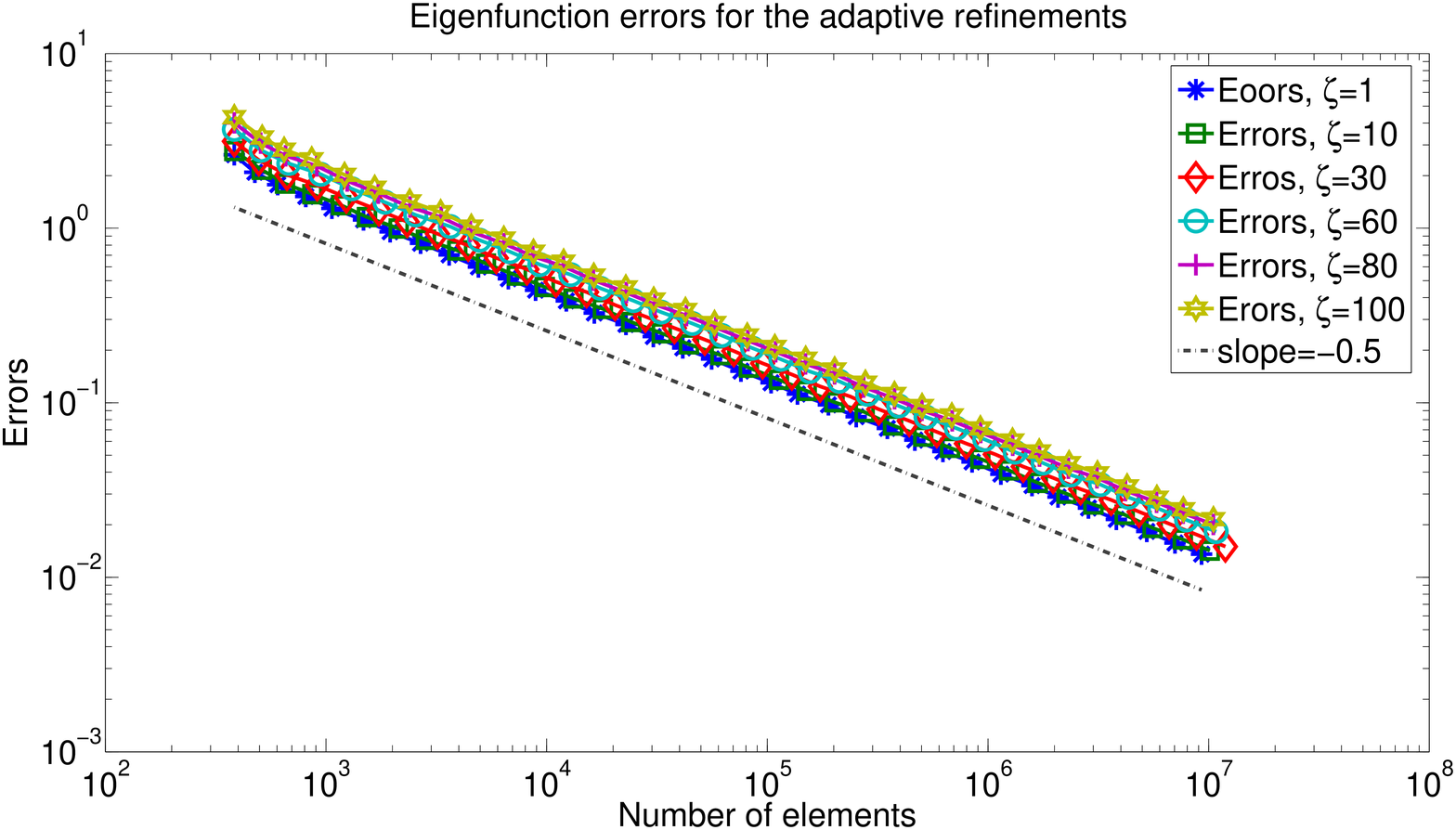}
\caption{\small\texttt The errors for eigenvalue problem (\ref{GPEsymply2})
which is solved by the multigrid method coupled with the adaptive refinement.
The left subfigure shows the errors for the eigenvalue approximation and the right one shows
the posteriori error estimates for the eigenfunction approximations.}\label{Error_GPE_Adaptive_Result}
\end{figure}

\begin{figure}[ht]
\centering
\includegraphics[width=6cm,height=6cm]{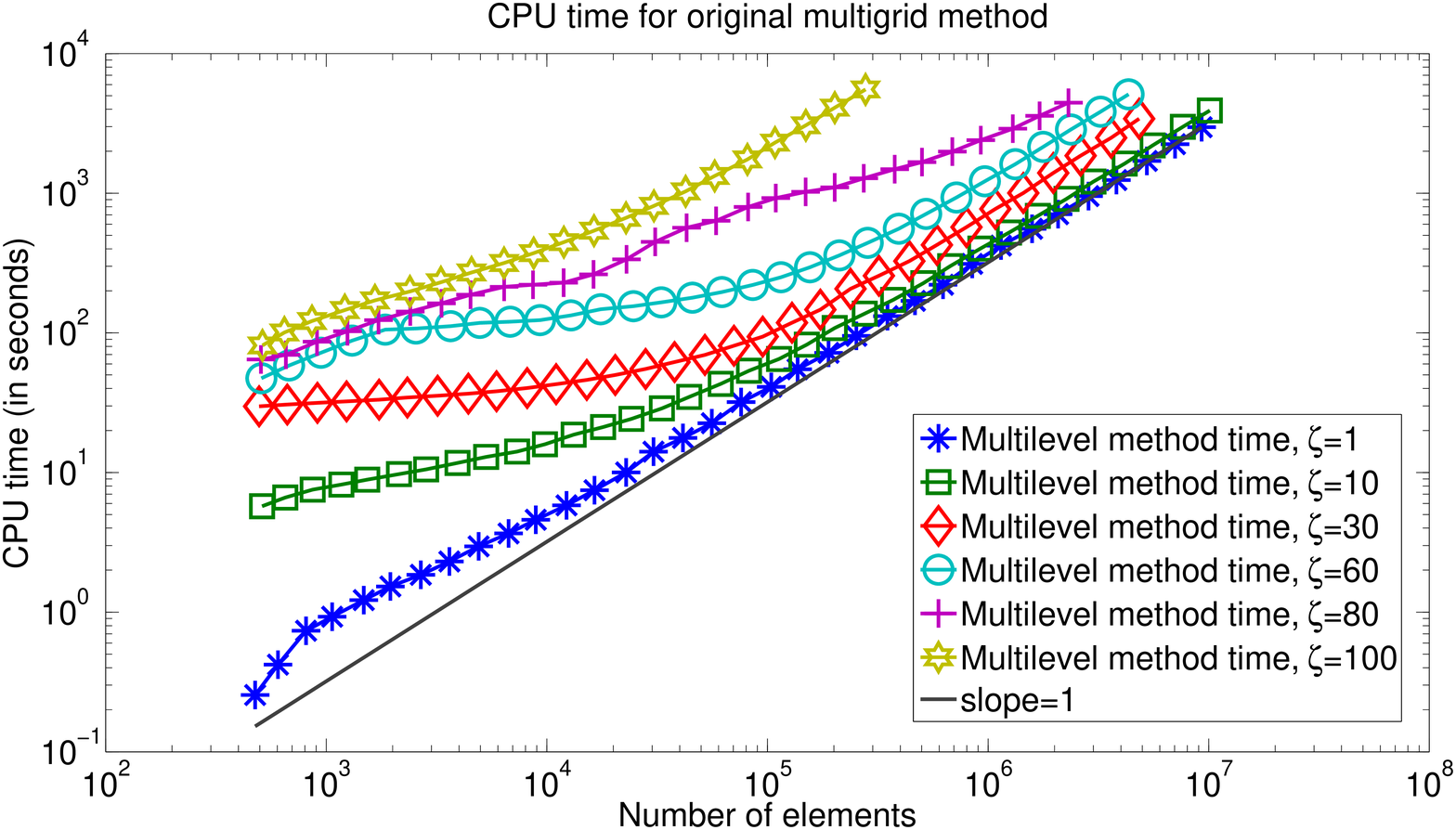}
\includegraphics[width=6cm,height=6cm]{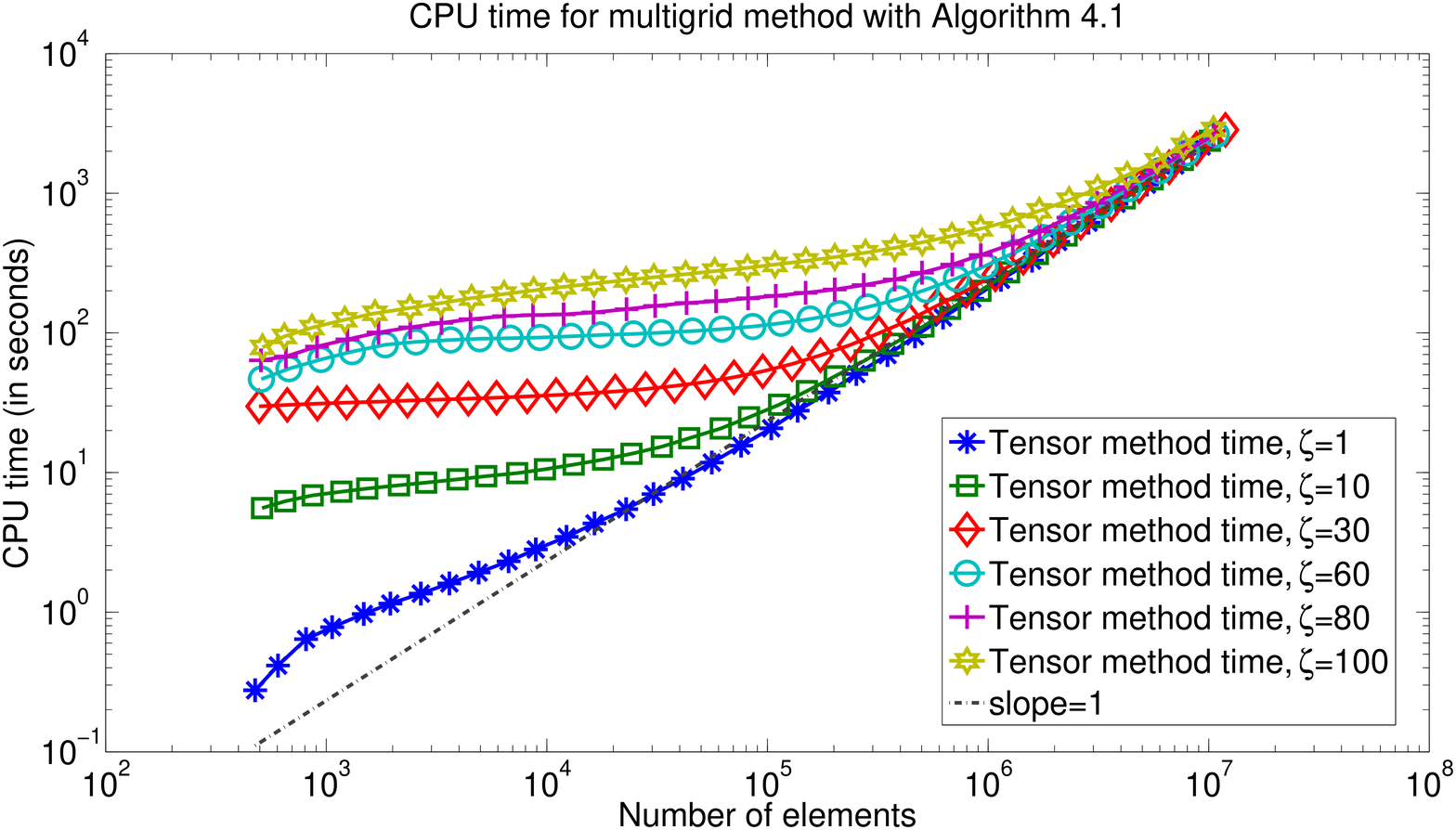}
\caption{\small\texttt The CPU time (in seconds) for  eigenvalue problem (\ref{GPEsymply2})
which is solved by the multigrid method coupled with the adaptive refinement.
Here multilevel method time denotes the CPU time for the original multigrid method in \cite{XieXie_CICP} and
tensor method time denotes the CPU time for Algorithm \ref{Algm:Multi_Correction}.}\label{Time_GPE_Adaptive_Result}
\end{figure}

\section{Concluding remarks}
In this paper, we propose an efficient implementing method for the multigrid method introduced in
\cite{XieXie_CICP} to solve GPE.  With the new implementing method for the nonlinear iteration,
the asymptotical computational work for solving GPE is almost the same as solving the corresponding
linear boundary value problem by the multigrid method, and almost independent of the nonlinearity of GPE.
Three examples are provided to validate the efficiency of the proposed method.

The idea and method here can also be extended to other problems with polynomial nonlinearity such as
Navier-Stokes and some phase models. Furthermore, we can use the algorithms here to design a preconditioner
for the general nonlinear problems and nonlinear eigenvalue problems.




\begin{thebibliography}{17}

\bibitem{Adams}
R. A. Adams, {Sobolev spaces}, Academic Press, New York, 1975.



\bibitem{AnderEnsherMattewWieman}
M. H. Anderson, J. R. Ensher , M. R. Mattews , C. E. Wieman and E. A. Cornell,
{Observation of Bose-Einstein
condensation in a dilute atomic vapor}, Science, 269 (1995), 198-201.

\bibitem{AnglinKetterle}
J. R. Anglin and W. Ketterle, {Bose-Einstein condensation of atomic gasses},
Nature, 416 (2002), 211-218.

%




\bibitem{BaoCai}
W. Bao and Y. Cai, Mathematical theory and numerical methods for Bose-Einstein condestion,
Kinetic and Related Models, 6(1) (2013), 1-135.



%
%
\bibitem{BaoTang}
W. Bao and W. Tang, {Ground-state solution of trapped interacting
Bose-Einstein condensate by directly minimizing the energy functional},
J. Comput. Phys., 187 (2003), 230-254.


\bibitem{Bramble}
J. H. Bramble, {Multigrid Methods}, Pitman Research Notes in Mathematics,
V. 294, John Wiley and Sons, 1993.



%


\bibitem{BrennerScott}
S. Brenner and L. Scott, {The Mathematical Theory of Finite Element
Methods}, New York: Springer-Verlag, 1994.

\bibitem{CancesChakirMaday}
E. Canc\`{e}s, R. Chakir, Y. Maday, {Numerical analysis of
nonlinear eigenvalue problems},
J. Sci. Comput., 45(1-3) (2010), 90-117.


\bibitem{ChienHuangJengLi}
C.-S. Chien, H.-T. Huang,  B.-W. Jeng and Z.-C. Li,
{Two-grid discretization schemes for nonlinear Schr\"{o}inger
equations}, J. Comput. Appl. Math., 214 (2008), 549-571.


\bibitem{ChienJeng}
C.-S. Chien, B.-W. Jeng,
{A two-grid discretization scheme for semilinear elliptic eigenvalue problems},
 SIAM J. Sci. Comput., 27(4) (2006), 1287-1304.




\bibitem{Ciarlet}
P. G. Ciarlet, {The Finite Element Method for Elliptic Problems},
Amsterdam: North-Holland, 1978.



\bibitem{CornellWieman}
E. A. Cornell and C. E. Wieman, {Nobel Lecture: Bose-Einstein condensation
in a dilute gas, the first 70 years and some recent experiments},
Rev. Mod. Phys., 74 (2002), 875-893.



\bibitem{DalGioPitaString}
F. Dalfovo, S. Giorgini, L. P. Pitaevskii and S. Stringari,
{Theory of Bose-Einstein condensation in trapped
gases}, Rev. Mod. Phys., 71 (1999), 463-512.

%



\bibitem{Gross}
E. P. Gross, {Nuovo}, Cimento., 20 (1961), 454.

\bibitem{Hackbush}
W. Hackbush, {Multi-grid Methods and Applications},
Springer-Verlag, Berlin, 1985.


\bibitem{HenningMalqvistPeterseim}
P. Henning, A. M{\aa}lqvist and D. Peterseim, {Two-level discretization techniques for
ground state computations of Bose-Eistein condensates}, SIAM J. Numer. Anal., 52(4) (2014), 1525-1550.


\bibitem{JiaXieXieXu}
S. Jia, H. Xie, M. Xie and F. Xu,
A full multigrid method for nonlinear eigenvalue problems, Sci China Math, 59 (2016), 2037-2048.


\bibitem{Ketterle}
W. Ketterle, {Nobel lecture: When atoms behave as waves:
Bose-Einstein condensation and the atom laser},
Rev. Mod. Phys., 74 (2002), 1131-1151.

\bibitem{LaudauLifschitz}
L. Laudau and E. Lifschitz, {Quantum Mechanics: non-relativistic theory},
Pergamon Press, New York, 1977.

\bibitem{LiebSeiYang}
E. H. Lieb, R. Seiringer and J. Yangvason, {Bosons in a trap:
a rigorous derivation of the Gross-Pitaevskii energy functional},
Phys. Rev. A, 61 (2000), 043602.

\bibitem{LinXie}
Q. Lin and H. Xie, A multi-level correction scheme for eigenvalue problems,
Math. Comp.,  84 (2015), 71-88.




\bibitem{McCormick}
S. F. McCormick, ed., {Multigrid Methods}.
SIAM Frontiers in Applied Matmematics 3.
Society for Industrial and Applied Mathematics, Philadelphia, 1987.




\bibitem{Xie_Steklov}
H. Xie, A type of multilevel method for the Steklov eigenvalue problem,
IMA J. Numer. Anal.,34(2) (2014), 592-608.

\bibitem{Xie_Nonconforming}
H. Xie, A type of multi-level correction scheme for eigenvalue problems
by nonconforming finite element methods, BIT Numer. Math., 55 (2015), 1243-1266.

\bibitem{Xie_JCP}
H. Xie, A multigrid method for eigenvalue problem, J. Comput. Phys., 274 (2014),
550-561.

\bibitem{XieXie_CICP}
H. Xie and M. Xie, A multigrid method for ground state solution of Bose-Einetein condensates,
Commun. Comput. Phys., 19(3) (2016), 648-662.

\bibitem{Xu}
J. Xu, Iterative methods by space decomposition and subspace
correction, SIAM Review, 34(4) (1992), 581-613.

\bibitem{ZhouBEC}
A. Zhou, An analysis of fnite-dimensional approximations
for the ground state solution of Bose-Einstein condensates,
Nonlinearity, 17 (2004), 541-550.

\end{thebibliography}
\end{document}